%%%%%%%%%%%%%%%%%%%%%%%%%%%%%%%%%%
%%
%%  
%%  D. Beltita, K.-H. Neeb 
%%
%%  Version of 28.11.08; last changes by K-H   
%%  Version of 05.03.09; last changes by D. 
%%  Version of 10.03.09; last changes by K-H
%%   Log: minor corrections; added one par. to intro
%%   * erased the problems (see new par in intro) 
%%   * renames $V_\1^0 to V_\1' because I use X^0 for the interior of X. 
%%
%\documentclass[10pt]{article}
\documentclass[10pt, reqno]{amsart}%{article}
\usepackage{latexsym,amsmath,amssymb,amscd}
\textwidth16 cm
\textheight22 cm
\oddsidemargin.4cm
\evensidemargin.4cm

\def\today{\ifcase \month \or
   January \or February \or March \or April \or
   May \or June \or July \or August \or
   September \or October \or November \or December \fi
   \space\number\day , \number\year}

\begin{document}

\makeatletter
\@addtoreset{figure}{section}
\def\thefigure{\thesection.\@arabic\c@figure}
\def\fps@figure{h,t}
\@addtoreset{table}{bsection}

\def\thetable{\thesection.\@arabic\c@table}
\def\fps@table{h, t}
\@addtoreset{equation}{section}
\def\theequation{%\thesection.
\arabic{equation}}
\makeatother

\newcommand{\bfi}{\bfseries\itshape}
\newcommand{\3}{\ss{}}

\def\theoremname{Theorem}
\def\propositionname{Proposition}
\def\corollaryname{Corollary}
\def\lemmaname{Lemma}
\def\remarkname{Remark}
\def\stepname{Step}
\def\definitionname{Definition}
\def\exercisename{Exercise}
\def\examplename{Example}
\def\examplesname{Examples}
\def\problemname{Problem}
\def\problemsname{Problems}
\def\proofname{Proof}

\def\@thmcounter#1{\noexpand\arabic{#1}}
\def\@thmcountersep{}
\def\@begintheorem#1#2{\it \trivlist \item[\hskip 
\labelsep{\bf #1\ #2.\quad}]}
\def\@opargbegintheorem#1#2#3{\it \trivlist
      \item[\hskip \labelsep{\bf #1\ #2.\quad{\rm #3}}]}
\makeatother
\newtheorem{theo}{\theoremname}[section]
\newtheorem{propo}[theo]{\propositionname}
\newtheorem{coro}[theo]{\corollaryname}
\newtheorem{lemm}[theo]{\lemmaname}

\newenvironment{theorem}{\begin{theo}\it}{\end{theo}}
\newenvironment{proposition}{\begin{propo}\it}{\end{propo}}
\newenvironment{corollary}{\begin{coro}\it}{\end{coro}}
\newenvironment{lemma}{\begin{lemm}\it}{\end{lemm}}

\newtheorem{rem}[theo]{\remarkname}
\newenvironment{remark}{\begin{rem}\rm}{\end{rem}}

\newtheorem{defin}[theo]{\definitionname} %% write
\newenvironment{definition}{\begin{defin}\rm}{\end{defin}}

\newtheorem{notat}[theo]{Notation} 
\newenvironment{notation}{\begin{notat}\rm}{\end{notat}}

\newtheorem{ex}[theo]{\examplename}
\newenvironment{example}{\begin{ex}\rm}{\end{ex}}

\newtheorem{exs}[theo]{\examplesname}
\newenvironment{examples}{\begin{exs}\rm}{\end{exs}}

\newtheorem{conj}[theo]{\conjecturename}
\newenvironment{conjecture}{\begin{conj}\rm}{\end{conj}}

\newtheorem{pr}[theo]{\problemname}
\newenvironment{problem}{\begin{pr}\rm}{\end{pr}}

%%%%%%%%%%%%%%%%%%%%%%%%%%%%%%%%%%%%%%%%%%%%%
%%% Todo
\newcommand{\todo}[1]{\vspace{5 mm}\par \noindent
\framebox{\begin{minipage}[c]{0.85 \textwidth}
\tt #1 \end{minipage}}\vspace{5 mm}\par}
%%%

\newcommand{\1}{{\bf 1}}

\newcommand{\hotimes}{\widehat\otimes}

\newcommand{\Ci}{{\mathcal C}^\infty}
\newcommand{\comp}{\circ}
\newcommand{\D}{\text{\bf D}}
\newcommand{\de}{{\rm d}}
\newcommand{\ev}{{\rm ev}}
\newcommand{\id}{{\rm id}}
\newcommand{\ie}{{\rm i}}
\newcommand{\End}{{\rm End}\,}
\newcommand{\Fl}{{\rm Fl}}
\newcommand{\Gr}{{\rm Gr}\,}
\newcommand{\Hom}{{\rm Hom}\,}
\newcommand{\Ker}{{\rm Ker}\,}
\newcommand{\lf}{{\rm l}}
\newcommand{\Ran}{{\rm Ran}\,}
\newcommand{\rad}{{\rm rad}\,}
\newcommand{\spann}{{\rm span}}
\newcommand{\Tr}{{\rm Tr}\,}
\newcommand{\trile}{\trianglelefteq}

\newcommand{\ad}{\mathop{{\rm ad}}\nolimits}
\newcommand{\Exp}{\mathop{{\rm Exp}}\nolimits}
\newcommand{\Log}{\mathop{{\rm Log}}\nolimits}
\newcommand{\GL}{\mathop{{\rm GL}}\nolimits}
\newcommand{\PGL}{\mathop{{\rm PGL}}\nolimits}

\newcommand{\G}{{\rm G}}
\newcommand{\U}{{\rm U}}
\newcommand{\VB}{{\rm VB}}

\newcommand{\cA}{{\mathcal A}}
\newcommand{\cR}{{\mathcal R}}
\newcommand{\Ac}{{\mathcal A}}
\newcommand{\Bc}{{\mathcal B}}
\newcommand{\Cc}{{\mathcal C}}
\newcommand{\Dc}{{\mathcal D}}
\newcommand{\Gc}{{\mathcal G}}
\newcommand{\Hc}{{\mathcal H}}
\newcommand{\Ic}{{\mathcal I}}
\newcommand{\Kc}{{\mathcal K}}
\newcommand{\Lc}{{\mathcal L}}
\newcommand{\Oc}{{\mathcal O}}
\newcommand{\Pc}{{\mathcal P}}
\newcommand{\Sc}{{\mathcal S}}
\newcommand{\Xc}{{\mathcal X}}
\newcommand{\Zc}{{\mathcal Z}}

\newcommand{\Gg}{{\mathfrak G}}
\newcommand{\Hg}{{\mathfrak H}}
\newcommand{\Jg}{{\mathfrak J}}
\newcommand{\Sg}{{\mathfrak S}}

\newcommand{\g}{{\mathfrak g}}
\newcommand{\h}{{\mathfrak h}}
\newcommand{\fj}{{\mathfrak j}}
\newcommand{\s}{{\mathfrak s}}
\newcommand{\fr}{{\mathfrak r}}

\newcommand{\N}{{\mathbb N}}
\newcommand{\Z}{{\mathbb Z}}
\newcommand{\R}{{\mathbb R}\,}
\newcommand{\C}{{\mathbb C}\,}
\newcommand{\K}{{\mathbb K}}
\newcommand{\F}{{\mathbb F}}
\newcommand{\Q}{{\mathbb Q}}
\renewcommand{\H}{{\mathbb H}}

\renewcommand{\:}{\colon}
\newcommand{\0}{{\bf 0}}
\newcommand{\striang}{{\frak{sut}}}
\newcommand{\triang}{{\frak ut}}

\newcommand{\subeq}{\subseteq}
\newcommand{\supeq}{\supseteq}
\newcommand{\ctr}{{}_\rightharpoonup}
\newcommand{\into}{\hookrightarrow}
\newcommand{\eps}{\varepsilon}

\def\onto{\to\mskip-14mu\to} 

\renewcommand{\hat}{\widehat}
\renewcommand{\tilde}{\widetilde}

\newcommand{\Rarrow}{\Rightarrow}
\newcommand{\nin}{\noindent} 
\newcommand{\oline}{\overline}
\newcommand{\Fa}{{\rm Fa}}
\newcommand{\Larrow}{\Leftarrow}
\newcommand{\la}{\langle}
\newcommand{\ra}{\rangle}
\newcommand{\Mon}{{\rm Mon}}
\newcommand{\up}{\mathop{\uparrow}}
\newcommand{\down}{\mathop{\downarrow}}
\newcommand{\res}{\vert}
\newcommand{\sdir}{\times_{sdir}}

\renewcommand{\L}{\mathop{\bf L{}}\nolimits}

\pagestyle{myheadings}
\markboth{\sl CIA}{\sl CIA}

%%%%%%%%%%%%%%%%%%%%%%%%%%%%%%%%%%%%%%%%%%%%%%%%%%%%%%%%%%%%%%%%%%%%%%%%
%\begin{document}

\makeatletter
\title{Geometric characterization of hermitian algebras with 
continuous inversion}
\author{Daniel~Belti\c t\u a} 
\author{Karl-Hermann~Neeb}
\address{Institute of Mathematics ``Simion
Stoilow'' of the Romanian Academy, 
P.O. Box 1-764, RO-014700 Bucharest, Romania}
\email{Daniel.Beltita@imar.ro}
\address{Department of Mathematics, Darmstadt University of Technology, 
Schlossgartenstrasse 7, D-64289 Darmstadt, Germany}
\email{neeb@mathematik.tu-darmstadt.de}
\date{March 10, 2009}%{\today}

\begin{abstract}
A hermitian algebra is a unital associative ${\mathbb C}$-algebra 
endowed with an involution such that the spectra of self-adjoint elements 
are contained in~${\mathbb R}$. 
In the case of an algebra $\Ac$ endowed 
with a Mackey-complete, locally convex topology such that 
the set of invertible elements is open and the inversion mapping 
is continuous, we construct the smooth structures on 
the appropriate versions of flag manifolds. 
Then we prove that if such a locally convex algebra $\Ac$ is endowed 
with a continuous involution, then it is a hermitian algebra 
if and only if the natural action of all unitary groups 
$U_n(\cA)$ on each flag manifold is transitive. 

\noindent {\it Keywords:} 
algebra with continuous inversion, flag manifold, locally convex Lie group

\noindent {\it MSC 2000:} Primary 46K05; Secondary 22E65, 58B10, 46H30
\end{abstract}
\makeatother
\maketitle

%\tableofcontents

\section{Introduction}

There exists a deep relationship between 
the locally convex topological algebras with continuous inversion 
and Lie theory. 
This fundamental knowledge originated in the paper \cite{Gl02} 
and has been crucial for many of the subsequent advances 
in the theory of infinite-dimensional Lie groups modeled on locally convex spaces; 
see for instance the survey \cite{Ne06} and the forthcoming monograph~\cite{GN06}. 

On the other hand, a large part of the earlier reseach in this area 
focused on Banach manifolds and their symmetry groups. 
%groups constructed out of operator algebras on Hilbert spaces. 
This was naturally related to the spectral theory of Hilbert space operators 
and has lead to many deep results.
A good source of information in this connection is the monograph~\cite{Up85}. 

It is one of the aims of the present paper to relate to each other 
the themes mentioned above, by pointing out that the spectral properties in 
involutive topological algebras are intimately connected with 
differential geometric properties of certain Lie groups and homogeneous spaces 
associated with the algebras under consideration. 
Specifically, we prove that a Mackey-complete algebra $\Ac$ 
with continuous inversion is hermitian 
(i.e., the spectrum of every self-adjoint element is contained in~${\mathbb R}$) 
if and only if the unitary group of the matrix algebra $M_2(\Ac)$ 
acts transitively on the corresponding flag manifolds (Corollary~\ref{char} below). 
This result seems to be new even in the special case when $\Ac$ is a Banach algebra.  
However, the Banach setting is not wide enough to cover 
some of the most important situations, particularly the ones coming from 
the theory of loop groups and gauge theory; see Example~\ref{loop} below for some more details. 

The aforementioned flag manifolds are well known generalizations of the projective spaces 
and were traditionally studied in complex and algebraic geometry. 
There exist a number of infinite-dimensional versions of these compact complex manifolds, 
which are homogeneous Banach manifolds and play an interesting role in operator theory
(see for instance \cite{Up85} or \cite{Bel06}) or in the theory of loop groups 
and certain areas of mathematical physics (\cite{PS90}). 
We recall that the construction of smooth structures on homogeneous spaces of 
infinite-dimensional Lie groups is often a difficult issue 
since it usually relies on the inverse mapping theorem, 
which fails beyond the setting of Banach spaces; 
even in this setting, one additionally needs to find  complements for subspaces in Banach spaces, 
which is often a rather difficult task. 
From this point of view, a by-product of the present research 
turns out particularly important: 
We show that by just using 
an appropriate Gau\ss\ decomposition for matrices with entries 
in a topological algebra, it is possible to construct adequate smooth structures 
on the flag manifolds associated with the continuous inverse algebras (Theorem~\ref{manifolds}). 
The core of our method is a very general lemma interesting on its own, 
which we have recorded in the Appendix~\ref{A}. 

The methods used in this paper are close to those used in \cite{BN05} 
in a very general context 
to obtain manifold structures on homogeneous spaces associated to $3$-graded 
Lie algebras, which leads in particular to natural manifold structures 
on the generalized Gra\3mannians, obtained as orbits of projections. 
We plan to use the results of the present paper, such as 
Corollary~\ref{cia-nest2}, to study representations in spaces 
of sections of holomorphic vector over these manifolds 
(cf.\ \cite{MNS09} for some results in the Banach context). 

\section{Definitions and examples}\label{Sect2}

\begin{notation}\label{bounded}
For an arbitrary unital complex associative algebra $\Ac$ we shall 
use the following notation: 
\begin{itemize}
\item[$\bullet$] $\Ac^\times=\{a\in\Ac\mid(\exists a^{-1}\in\Ac)\quad 
aa^{-1}=a^{-1}a=\1\}$;
\item[$\bullet$] the {\it spectrum} of any $a\in\Ac$ is 
$\sigma_{\Ac}(a) = \sigma(a)
=\{\lambda\in{\mathbb C}\mid \lambda\1-a\not\in\Ac^\times\}$.
%\item[$\bullet$] the {\it spectral radius} of any $a\in\Ac$ is 
%$r_{\Ac}(a)=\sup\{\vert\lambda\vert\mid\lambda\in\sigma(a)\}\in[0,\infty]$;
%\item[$\bullet$] the {\it center} 
%$\Zc_{\Ac}=\{a\in\Ac\mid(\forall b\in\Ac)\quad ab=ba\}$; 
%\item[$\bullet$] $\Nc_{\Ac}=\{a\in\Ac\mid(\exists N\in{\mathbb N})\quad 
%a^N=0\}$; 
%\item[$\bullet$] $\Qc_{\Ac}=\{a\in\Ac\mid\sigma(a)=\{0\}\} = 
%\{ a \in \Ac \mid \1+ \C a 
%\subseteq \Ac^\times\}$; 
%\item[$\bullet$] the {\it radical} $\rad\Ac=
%\{a\in\Ac\mid(\forall b\in\Ac)\quad \1-ab\in\Ac^\times\} \subeq \Qc_\Ac$
\end{itemize}
In addition, if $\Ac$ is endowed with an involution 
$a\mapsto a^*$, the we denote 
\begin{itemize}
\item[$\bullet$] the \textit{unitary group} 
$\U({\Ac})=\{u\in\Ac^\times\mid u^*u=\1\}$; 
\item[$\bullet$] the \textit{set of non-negative elements} 
$\Ac_{+}=\{a\in\Ac\mid a=a^*\text{ and }
  \sigma_{\Ac}(a)\subseteq[0,\infty)\}$;
\item[$\bullet$] the \textit{set of positive elements} 
$\Ac_{+}^\times=\Ac^\times\cap\Ac_{+}$. 
\end{itemize}
Moreover, for any complex vector space $\Xc$ we denote by $\Lc(\Xc)$ 
the set of all linear maps from $\Xc$ into itself.
\end{notation}

\begin{definition}\label{herm}
Let $\Ac$ be an associative unital complex algebra endowed with an involution 
$a\mapsto a^*$. 
We say that $\Ac$ is a \emph{hermitian} algebra 
if $\sigma_{\Ac}(a)\subseteq{\mathbb R}$ whenever $a=a^*\in\Ac$. 
\end{definition}

For later use we record the following sufficient condition 
for an involutive algebra to be hermitian. 

\begin{remark}\label{squares}
If $\Ac$ is an associative unital complex algebra endowed with an involution 
and for every $a=a^*\in\Ac$ we have 
$-1\not\in\sigma_{\Ac}(a^2)$, then $\Ac$ is a hermitian algebra.  

In order to prove this assertion, let us first note that for every $a\in\Ac$ we have 
$\sigma_{\Ac}(a^2)\supseteq\{\lambda^2\mid\lambda\in\sigma_{\Ac}(a)\}$.  
In fact, if $\lambda\in{\mathbb C}$ and 
$\lambda^2\in{\mathbb C}\setminus\sigma_{\Ac}(a^2)$ then 
there exists $b\in\Ac$ with $b(\lambda^2\1-a^2)=(\lambda^2\1-a^2)b=\1$. 
Then $\lambda\1-a$ has both a left inverse and a right inverse, 
hence it belongs to $\Ac^\times$, 
and then $\lambda\in{\mathbb C}\setminus\sigma_{\Ac}(a)$. 

Now let $a=a^*\in\Ac$ and assume that $\sigma_{\Ac}(a)\not\subseteq{\mathbb R}$, 
so there exists $\lambda\in\sigma_{\Ac}(a)\setminus{\mathbb R}$. 
Then $\lambda=x+\ie y$ with $x,y\in{\mathbb R}$ and $y\ne0$, 
whence $\ie\in\sigma(\frac{1}{y}(a-x\1))$. 
Thence $-1=\ie^2\in\sigma((\frac{1}{y}(a-x\1))^2)$ by the above remark, 
and this contradicts the assumption on~$\Ac$ since $\frac{1}{y}(a-x\1)$ 
is a self-adjoint element in $\Ac$. 
\end{remark}

\begin{definition} 
A \emph{continuous inverse algebra} (CIA for short) is 
a Hausdorff locally convex unital algebra $\Ac$ whose unit group 
$\Ac^\times$ is 
open and for which the inversion map $\Ac^\times \to \Ac$, $a \mapsto a^{-1}$ 
is continuous. 

If, in addition, $\Ac$ is complete, then the same arguments 
as for Banach algebras lead to a holomorphic functional calculus 
(\cite{Wae67}, \cite{Gl02}). 
Since completeness is in general not inherited 
by quotients (\cite{Koe69}, \S 31.6), it is natural to consider for 
CIAs the weaker condition that they are {\it FC-complete} in the sense 
that they are closed under holomorphic functional calculus 
(see \cite{BN08}). 
This means 
that for $a \in A$, any open neighborhood $U$ of $\sigma(a)$, 
each holomorphic function $f \in \Oc(U)$ and any contour $\Gamma$ 
around $\sigma(a)$ in $U$, the integral 
$$ f(a) := \frac{1}{2\pi i} 
\oint_\Gamma f(\zeta) (a - \zeta \1)^{-1}\, d\zeta, $$
which defines an element of the completion of $\Ac$, actually exists in $\Ac$. 
\end{definition}

\begin{remark}\label{harald}
A discussion of hermitian algebras with continuous inversion 
including various equivalent characterizations 
in the case of Mackey complete algebras 
can be found in Section~7 of the paper \cite{Bi04}. 
\end{remark}

It is well known that the $C^*$-algebras are hermitian algebras. 
Here are two important examples that go beyond the 
traditional setting of operator algebras. 

\begin{example}\label{group} 
(Group algebras.) 
Let $G$ be any finite-dimensional connected nilpotent Lie group. 
Then the unitization of the group algebra $L^1(G)$ 
is always a hermitian Banach algebra; 
see \cite{Po77}. 
See \cite{Le76}, \cite{Ku79}, and \cite{FGL06} for a discussion 
of more general versions of group algebras that 
give rise to hermitian Banach algebras. 
\end{example}

\begin{example}\label{loop} 
(Loop algebras.) 
Let ${\mathbb T}$ denote the 1-dimensional torus 
and $\Ac:=\Ci({\mathbb T},M_n({\mathbb C}))$ 
for some $n\ge1$. 
Then $\Ac$ endowed with the pointwise defined operations has a natural structure of 
a hermitian (non-Banach) algebra with continuous inversion, 
which plays a central role in the theory of loop groups, 
inasmuch as $\Ac^\times$ is precisely the loop group associated with 
the general linear group $\GL_n({\mathbb C})$ 
(see for instance \cite{PS90}). 

More generally, one can prove that if $\Ac$ is a hermitian algebra with continuous inversion 
and $M$ is a compact topological space, then the algebra 
$\Cc(M,\Ac)$  of continuous $\Ac$-valued functions on $M$ 
with the pointwise defined operations is in turn a hermitian algebra with continuous inversion. 
A similar assertion holds for the algebra $\Ci(M,\Ac)$ of smooth $\Ac$-valued functions on $M$ 
provided that $M$ is a compact manifold;  
see \cite{Gl02} and also Examples~VIII.3 in \cite{Ne06}. 
\end{example}

\begin{definition}\label{nests}
For any unital involutive complex algebra $\Ac$ we denote 
$\Pc({\Ac})=\{p\in\Ac\mid p=p^*=p^2\}$. 
For $p,q\in\Ac$ the notation $p\le q$ means that $qp=p$. 
If, in addition, $p\ne q$, then we write $p<q$. 

Now assume that 
$\delta$: $0=p_0<p_1<\cdots< p_n=\1$ is a finite, totally ordered family 
of elements in $\Pc_{\Ac}$. 
We define the mapping of \emph{diagonal truncation} 
$$\Phi_{\delta}\colon\Ac\to\Ac, \quad 
x\mapsto\sum\limits_{k=1}^n(p_k-p_{k-1})x(p_k-p_{k-1})$$ 
and the unital associative subalgebra of $\Ac$, 
$$\Delta(\delta):=
\{x\in\Ac\mid xp_k=p_kxp_k\text{ for }k=0,1,\dots,n\} 
= \{x\in\Ac\mid xp_k\cA \subeq p_k \cA \text{ for }k=0,1,\dots,n\}, $$
which is the stabilizer of the flag 
$(p_0\cA,\ldots, p_n\cA)$ of right ideals. 
Note that the restriction of $\Phi_{\delta}$ to 
the algebra $\Delta(\delta)$ is multiplicative. 
Also, $\Phi_\delta$ is an idempotent mapping and its range 
is a unital $*$-subalgebra of $\Ac$ which 
can be described as 
$$D(\delta):=\Ran(\Phi_\delta)=\{x\in\Ac\mid xp_k=p_kx\text{ for }k=0,1,\dots,n\}.$$
We shall also denote
$N(\delta):=\Delta(\delta)\cap(\Phi_\delta)^{-1}(\1)$, which is a group 
of invertible elements in $\Delta(\delta)$. 
\end{definition}

\begin{example}\label{matrices} 
Let  $\Bc$ be a unital involutive complex algebra and $n\ge1$ arbitrary. 
Then  
the matrix algebra 
$\Ac:=
M_n(\Bc):=M_n({\mathbb C})\otimes\Bc$ 
has a natural structure of unital involutive complex algebra 
and if we define 
$p_1,p_2,\dots,p_n\in\Ac$ by 
$$p_1=\begin{pmatrix} 
\1 & 0  &  0& \dots &0 \\
0 & 0  &  0& \dots  & 0\\
0 & 0  & 0 & \dots  & 0\\
 \vdots &\vdots     & \vdots    &\ddots  & \vdots \\
 0&   0 &  0  &\dots & 0 \\
\end{pmatrix}, 
p_2=\begin{pmatrix} 
\1 & 0  &  0& \dots &0 \\
0 & \1  &  0& \dots  & 0\\
0 & 0  & 0 & \dots  & 0\\
 \vdots &\vdots     & \vdots    &\ddots  & \vdots \\
 0&   0 &  0  &\dots & 0 \\
\end{pmatrix}, 
\dots, 
p_n=\begin{pmatrix} 
\1 & 0  &  0& \dots &0 \\
0 & \1  &  0& \dots  & 0\\
0 & 0  & \1 & \dots  & 0\\
 \vdots &\vdots     & \vdots    &\ddots  & \vdots \\
 0&   0 &  0  &\dots & \1 \\
\end{pmatrix}, $$
then we get a totally ordered family $\delta$: $0=p_0<p_1<\cdots< p_n=\1$. 
The corresponding mapping $\Phi_\delta\colon\Ac\to\Ac$ is defined by 
replacing the off-diagonal entries by zeros, 
$$\begin{pmatrix} 
b_{11} & b_{12}  &  b_{13} & \dots &b_{1n} \\
b_{21} & b_{22}  &  b_{23} & \dots  & b_{2n} \\
b_{31} & b_{32}  & b_{33}  & \dots  & b_{3n}\\
 \vdots &\vdots     & \vdots    &\ddots  & \vdots \\
 b_{n1}& b_{n2} &  0  &\dots & b_{nn} \\
\end{pmatrix}
\longmapsto 
\begin{pmatrix} 
b_{11} & 0  &  0& \dots &0 \\
0 & b_{22}  &  0& \dots  & 0\\
0 & 0  & b_{33} & \dots  & 0\\
 \vdots &\vdots     & \vdots    &\ddots  & \vdots \\
 0&   0 &  0  &\dots & b_{nn} \\
\end{pmatrix}$$
while  
$\Delta(\delta)$ is the algebra of upper triangular matrices in $\Ac=M_n(\Bc)$. \end{example}

\section{Factorizations}

In this section we provide an extension of Proposition~3.1 in \cite{Pit88} 
from the setting of von Neumann algebras to the one of hermitian algebras 
with continuous inversion; see Proposition~\ref{cia-nest1} below. 
This will be a key tool in our geometric characterization of 
hermitian algebras in terms of flag manifolds 
(Theorem~\ref{trans} and Corollary~\ref{char}). 

\begin{lemma}\label{cia-01}
Let $\Ac$ be a unital associative algebra and $p\in\Pc(\Ac)$.  
Consider the unital algebra $\Ac_p:=p\Ac p$ with the unit $p$, 
and define 
$\iota_0\colon\Ac_p\hookrightarrow\Ac$ (the inclusion map) 
and 
$\iota_1\colon\Ac_p\to\Ac$, $x\mapsto x+(\1-p)$, 
which is an inclusion of multiplicative monoids. 
If $p\ne\1$, then for every $x\in\Ac_p$ we have 
$\sigma_{\Ac}(\iota_0(x))=\sigma_{\Ac_p}(x)\cup\{0\}$ 
 and 
$\sigma_{\Ac}(\iota_1(x))=\sigma_{\Ac_p}(x)\cup\{1\}$. 
\end{lemma}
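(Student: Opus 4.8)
The plan is to compute each spectrum directly from the definition by exhibiting inverses relative to the smaller algebra $\Ac_p$ and then reassembling them inside $\Ac$. The key algebraic fact is that $\Ac_p = p\Ac p$ is a unital algebra with unit $p$, and that multiplication by $p$ or by $\1-p$ commutes appropriately because $p$ is idempotent; in particular $\1 = p + (\1-p)$ is an orthogonal decomposition of the unit in the sense that $p(\1-p) = (\1-p)p = 0$.

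\smallskip
\noindent\textbf{The case of $\iota_0$.} Fix $x \in \Ac_p$, so $x = pxp$. For $\lambda \in \C$ consider $\lambda\1 - \iota_0(x) = \lambda\1 - x$. I would first treat $\lambda = 0$: since $p \ne \1$, the element $-x \in p\Ac p$ is annihilated on the left and right by $\1-p$, so it cannot be invertible in $\Ac$ (if $(-x)b = \1$ then multiplying by $\1-p$ on the left gives $0 = \1-p \ne 0$); hence $0 \in \sigma_\Ac(\iota_0(x))$, which matches the right-hand side. For $\lambda \ne 0$, I claim $\lambda\1 - x \in \Ac^\times$ if and only if $\lambda p - x \in \Ac_p^\times$, i.e.\ $\lambda \notin \sigma_{\Ac_p}(x)$. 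Indeed, if $y \in p\Ac p$ satisfies $y(\lambda p - x) = (\lambda p - x)y = p$, then one checks that $z := \frac1\lambda(\1-p) + y$ is a two-sided inverse of $\lambda\1 - x$ in $\Ac$, using $p x = x p = x$, $py = yp = y$, and orthogonality. Conversely, if $\lambda\1 - x$ has inverse $w \in \Ac$, then $y := pwp$ lies in $\Ac_p$ and, again using that $x$ and $p$ commute with $p$ and that $\lambda\1 - x$ acts as $\lambda(\1-p)$ on the complementary part, $y$ inverts $\lambda p - x$ in $\Ac_p$. This gives $\sigma_\Ac(\iota_0(x)) \setminus \{0\} = \sigma_{\Ac_p}(x) \setminus \{0\}$, and combined with $0 \in \sigma_\Ac(\iota_0(x))$ we obtain $\sigma_\Ac(\iota_0(x)) = \sigma_{\Ac_p}(x) \cup \{0\}$.

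\smallskip
\noindent\textbf{The case of $\iota_1$.} Now $\iota_1(x) = x + (\1-p)$ with $x = pxp$. For $\lambda \in \C$ we have $\lambda\1 - \iota_1(x) = (\lambda p - x) + (\lambda - 1)(\1-p)$, a decomposition adapted to the direct sum $\Ac = p\Ac p \oplus \cdots$ (more precisely to the two-sided Peirce-type splitting determined by $p$). For $\lambda = 1$ the $(\1-p)$-component vanishes, so by the same annihilation argument as above $\1\1 - \iota_1(x) = p - x$ is not invertible in $\Ac$ (here one uses $p \ne \1$), giving $1 \in \sigma_\Ac(\iota_1(x))$. For $\lambda \ne 1$, I claim invertibility of $\lambda\1 - \iota_1(x)$ in $\Ac$ is equivalent to invertibility of $\lambda p - x$ in $\Ac_p$: the candidate inverse is $\frac{1}{\lambda - 1}(\1-p) + y$ where $y(\lambda p - x) = (\lambda p - x)y = p$, and the converse is obtained by compressing an inverse $w$ to $pwp$, exactly as before. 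Hence $\sigma_\Ac(\iota_1(x)) \setminus \{1\} = \sigma_{\Ac_p}(x) \setminus \{1\}$, and together with $1 \in \sigma_\Ac(\iota_1(x))$ this yields $\sigma_\Ac(\iota_1(x)) = \sigma_{\Ac_p}(x) \cup \{1\}$.

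\smallskip
\noindent The main thing to be careful about is not a genuine obstacle but a bookkeeping point: one must consistently use that $p$ and $\1 - p$ are complementary orthogonal idempotents and that every element of $\Ac_p$ is fixed by left and right multiplication by $p$, so that the two summands in each decomposition do not interact. The hypothesis $p \ne \1$ is used precisely (and only) to guarantee $\1 - p \ne 0$, which is what forces $0 \in \sigma_\Ac(\iota_0(x))$ and $1 \in \sigma_\Ac(\iota_1(x))$; without it those extra points could be absent. No topology or completeness enters — this is a purely algebraic statement about spectra.
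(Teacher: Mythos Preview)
Your proof is correct and follows essentially the same approach as the paper: both arguments construct explicit inverses by splitting along the orthogonal decomposition $\1 = p + (\1-p)$, using $b + \lambda^{-1}(\1-p)$ (resp.\ $b + (\lambda-1)^{-1}(\1-p)$) in one direction and compressing a given inverse by $p$ in the other. The only cosmetic difference is that you first isolate the exceptional values $\lambda=0$ and $\lambda=1$ by a direct annihilation argument, whereas the paper folds this into the equivalence and rules them out at the end; also, the paper notes $pc = cp$ before compressing, while you write $pwp$ directly---these are equivalent since $p$ commutes with $\lambda\1 - x$ and hence with its inverse.
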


\begin{proof}
The first of these equalities is equivalent to 
${\mathbb C}^\times\setminus\sigma_{\Ac_p}(x)
={\mathbb C}\setminus\sigma_{\Ac}(\iota_0(x))$. 
To prove the inclusion $\subseteq$, 
let $\lambda\in{\mathbb C}^\times\setminus\sigma_{\Ac_p}(x)$ be arbitrary. 
Then there exists $b\in\Ac_p$ such that 
$(\lambda p-x)b=b(\lambda p-x)=p$.  
Since $bp=pb$, it then follows that 
$(\lambda\1-x)(b+\lambda^{-1}(\1-p))=
((\lambda p-x)+\lambda(\1-p))(b+\lambda^{-1}(\1-p))
=p+(\1-p)=\1$, 
and similarly 
$(b+\lambda^{-1}(\1-p))(\lambda\1-x)=\1$. 
Thus $\lambda\1-x$ has the inverse $b+\lambda^{-1}(\1-p)\in\Ac$. 
Since $\iota_0(x)=x$,  
in particular we get 
$\lambda\in{\mathbb C}\setminus\sigma_{\Ac}(\iota_0(x))$. 
Conversely, assume the latter condition. 
Then there exists $c\in\Ac$ such that $(\lambda\1-x)c=c(\lambda\1-x)=\1$. 
As $px=xp$, it follows at once that $pc=cp\in\Ac_p$, 
and then 
$(\lambda p-x)pc=p(\lambda\1-x)c=p$ 
and $pc(\lambda p-x)=c(\lambda\1-x)p=p$. 
Thus $\lambda p-x$ has the inverse $pc\in\Ac_p$, 
and in particular $\lambda\in{\mathbb C}\setminus\sigma_{\Ac_p}(x)$. 
If $\lambda=0$, then $xc=cx=-\1$ hence $x$ is invertible in $\Ac$; 
on the other hand, since $x\in\Ac_p$, 
we have $x(\1-p)=0$, and then the fact that $x$ is invertible implies 
$\1-p=0$, which contradicts our hypothesis. 
Thus $\lambda\ne0$, and then 
$\lambda\in{\mathbb C}^\times\setminus\sigma_{\Ac_p}(x)$ as we were wishing for. 

The proof of the second of the asserted equalities relies on a similar method. 
We actually check that 
$({\mathbb C}\setminus\{1\})\setminus\sigma_{\Ac_p}(x)
={\mathbb C}\setminus\sigma_{\Ac_p}(\iota_1(x))$. 
If $\lambda$ is an arbitrary element in the left-hand side of this equation 
and $b\in\Ac_p$ is the inverse of $\lambda p-x$, 
then $b+(\lambda-1)^{-1}(\1-p)$ turns out to be the inverse 
of $\lambda\1-\iota_1(x)$. 
Conversely, if $\lambda\in{\mathbb C}\setminus\sigma_{\Ac_p}(\iota_1(x))$ 
and $d$ is the inverse of $\lambda\1-\iota_1(x)$ in $\Ac$, 
then $pd=dp\in\Ac_p$ and this element is the inverse 
of $\lambda p-x$ in $\Ac_p$. 
\end{proof}

The first part of the following statement was also noted in Remark~7.1 in \cite{DG01}; 
see also Lemma~1.2 in \cite{Ne08}. 

\begin{proposition}\label{cia-red}
Let $\Ac$ be a unital algebra with continuous inversion 
and $p\in\Pc(\Ac)$. 
Then the unital algebra $\Ac_p:=p\Ac p$ with the unit $p$ 
and endowed with the relative topology is in turn 
an algebra with continuous inversion. 
In addition if $\Ac$ is a hermitian algebra with continuous inversion 
and $p=p^*$, 
then $\Ac_p$ is in turn a hermitian algebra with respect to the involution 
induced from $\Ac$ and 
$$(\Ac_p)^\times=\iota_1^{-1}(\Ac^\times)$$
where $\iota_1\colon\Ac_p\to\Ac$, $x\mapsto x+\1-p$. 
\end{proposition}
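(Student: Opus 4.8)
The plan is to break the statement into three parts and handle each in turn. First I would show that $\Ac_p = p\Ac p$, with the relative topology, is a CIA. The key observation is the identity $(\Ac_p)^\times = \iota_1^{-1}(\Ac^\times)$, which I would prove directly at the purely algebraic level: if $x \in \Ac_p$ and $\iota_1(x) = x + \1 - p$ is invertible in $\Ac$ with inverse $d$, then since $x$ commutes with $p$ (as $x \in p\Ac p$) one checks $pd = dp \in \Ac_p$ and this is a two-sided inverse of $x$ in $\Ac_p$; conversely if $x$ is invertible in $\Ac_p$ with inverse $b \in \Ac_p$, then $b + \1 - p$ inverts $\iota_1(x)$ in $\Ac$. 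This is essentially the $\lambda = 1$ half of Lemma~\ref{cia-01}, so I would simply invoke that lemma (applied after replacing $x$ by $p - x$, or just quote the computation). Granting this, $(\Ac_p)^\times = \iota_1^{-1}(\Ac^\times)$ is open in $\Ac_p$ because $\iota_1$ is continuous (it is affine-linear) and $\Ac^\times$ is open in $\Ac$; and inversion in $\Ac_p$ is $x \mapsto p \cdot \iota_1(x)^{-1} \cdot p$, a composition of continuous maps (multiplication in $\Ac$ is continuous, $\iota_1$ is continuous, inversion in $\Ac$ is continuous by hypothesis), hence continuous. The Hausdorff locally convex structure is inherited automatically.

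Next, assuming $\Ac$ is moreover hermitian and $p = p^*$, I would show $\Ac_p$ is hermitian. The involution on $\Ac_p$ induced from $\Ac$ is well-defined precisely because $p^* = p$, so $(p\Ac p)^* = p\Ac p$. Now take $x = x^* \in \Ac_p$; I must show $\sigma_{\Ac_p}(x) \subseteq \R$. Here I would apply Lemma~\ref{cia-01}: if $p \neq \1$ then $\sigma_{\Ac_p}(x) \subseteq \sigma_{\Ac_p}(x) \cup \{0\} = \sigma_{\Ac}(\iota_0(x))$, and $\iota_0(x) = x$ is self-adjoint in $\Ac$ (the inclusion $\iota_0$ respects $*$), so $\sigma_\Ac(\iota_0(x)) \subseteq \R$ because $\Ac$ is hermitian; intersecting, $\sigma_{\Ac_p}(x) \subseteq \R$. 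The case $p = \1$ is trivial since then $\Ac_p = \Ac$. One small point to note: Lemma~\ref{cia-01} requires $p \neq \1$ for the spectral identities, but when $p = \1$ there is nothing to prove, so the case split is harmless.

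Finally the displayed formula $(\Ac_p)^\times = \iota_1^{-1}(\Ac^\times)$ has already been established as the engine of the first part, so it only remains to state it. I would organize the write-up so that the identity is proved once, early, and then reused. Overall this proof is short and the individual steps are routine; the only thing requiring care is making sure one uses the correct embedding ($\iota_1$ for the description of the unit group and openness, because it sends $p$ to $\1$ and hence $\Ac_p^\times$ into $\Ac^\times$; $\iota_0$ for the spectral/hermitian statement, because it is a $*$-homomorphism onto $p\Ac p$ whose spectrum differs from that in $\Ac_p$ only by the harmless point $0$). I do not anticipate a genuine obstacle; the main thing to get right is bookkeeping of which of $\iota_0, \iota_1$ is invoked where and verifying continuity of $x \mapsto p\,\iota_1(x)^{-1}\,p$ as a map $(\Ac_p)^\times \to \Ac_p$.
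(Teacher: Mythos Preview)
Your proposal is correct and follows essentially the same route as the paper: use $\iota_1$ to pull back openness of the unit group and continuity of inversion from $\Ac$, then invoke Lemma~\ref{cia-01} (via $\iota_0$) for the hermitian assertion. The only cosmetic difference is that the paper notes $\iota_1$ is \emph{multiplicative} (a monoid homomorphism sending $p\mapsto\1$), so it directly intertwines inversion, whereas you write the inverse explicitly as $x\mapsto p\,\iota_1(x)^{-1}p$; these are equivalent formulations.
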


\begin{proof} 
It is clear that the continuous map $\iota_1\colon\Ac_p\to\Ac$, 
is multiplicative, satisfies $(\Ac_p)^\times=\iota^{-1}(\Ac^\times)$, 
and intertwines the inversion mappings 
on $(\Ac_p)^\times$ and $\Ac^\times$. 
This shows that $(\Ac_p)^\times$ is an open subset of $\Ac_p$ 
and the inversion mapping is continuous on $(\Ac_p)^\times$. 

Now assume that $\Ac_p$ is a hermitian algebra and let 
$x=x^*\in\Ac_p$ be arbitrary. 
It then follows by Lemma~\ref{cia-01}
that $\sigma_{\Ac_p}(x)\subseteq\sigma_{\Ac}(x)\subseteq{\mathbb R}$, 
which concludes the proof. 
\end{proof}

In the special case when $\Ac$ is a Banach algebra, the conclusion of 
the following proposition is well known; see for instance the  assertion~(4)  
in the paper \cite{Le76}.  
In order to obtain this result in the general situation, 
we shall rely on a purely algebraic result established in \cite{Wi76}. 

\begin{proposition}\label{wichmann}
Let $\Ac$ be a CIA endowed with a continuous involution $a\mapsto a^*$. 
If $\Ac$ is Mackey complete, then for every $n\ge1$ the following assertions are equivalent: 
\begin{enumerate}
\item\label{n=1} the algebra $\Ac$ is hermitian; 
\item\label{n} the matrix algebra $M_n(\Ac)$ is hermitian. 
\end{enumerate}
\end{proposition}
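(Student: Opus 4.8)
The plan is to reduce the equivalence of hermitianness for $\Ac$ and $M_n(\Ac)$ to a purely algebraic statement and then invoke the result of Wichmann from \cite{Wi76}. The direction \eqref{n}$\Rightarrow$\eqref{n=1} is the easy one: embed $\Ac$ into $M_n(\Ac)$ as the upper-left corner via the map $a\mapsto \mathrm{diag}(a,0,\dots,0)$, or rather, to keep things invertible, first treat $\Ac$ as $\Ac_p$ with $p$ the rank-one diagonal idempotent $\mathrm{diag}(\1,0,\dots,0)\in M_n(\Ac)$. By Lemma~\ref{cia-01}, for self-adjoint $a\in\Ac$ the spectrum of $\iota_1(a)=\mathrm{diag}(a,\1,\dots,\1)$ in $M_n(\Ac)$ equals $\sigma_\Ac(a)\cup\{1\}$, which lies in $\R$ whenever $M_n(\Ac)$ is hermitian; hence $\sigma_\Ac(a)\subseteq\R$ and $\Ac$ is hermitian. (This uses Proposition~\ref{cia-red} only implicitly, through the identification of $\Ac$ with a corner; one does not even need the topology here.)

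For the substantive direction \eqref{n=1}$\Rightarrow$\eqref{n}, the idea is to show that hermitianness passes from $\Ac$ to $M_n(\Ac)$ by a general algebraic mechanism, the point being to avoid any use of functional calculus that would require genuine completeness. The relevant tool is the result of Wichmann (\cite{Wi76}) characterizing, in purely algebraic terms, when passage to matrix algebras preserves the property ``$-1\notin\sigma(a^2)$ for every self-adjoint $a$''; note this is exactly the sufficient condition for hermitianness recorded in Remark~\ref{squares}, and one checks easily it is also necessary (if $\Ac$ is hermitian and $a=a^*$, then $\sigma_\Ac(a)\subseteq\R$, so $\sigma_\Ac(a^2)\supseteq\{\lambda^2\mid \lambda\in\sigma_\Ac(a)\}\subseteq[0,\infty)$, but in fact one needs $-1\notin\sigma_\Ac(a^2)$, which follows because $a^2+\1=(a-\ie\1)(a+\ie\1)$ is a product of two elements each of which is invertible, as $\pm\ie\notin\sigma_\Ac(a)\subseteq\R$). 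So the property ``hermitian'' is equivalent, for an involutive algebra, to ``$\1+a^2\in\Ac^\times$ for all $a=a^*$'', and Wichmann's theorem applies to show this property is inherited by $M_n(\Ac)$ from $\Ac$ — provided one can verify Wichmann's hypotheses in our setting.

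The role of the topological assumptions (CIA, continuous involution, Mackey completeness) is precisely to guarantee that those hypotheses hold. Concretely, Wichmann's argument presumably requires some form of symmetric-algebra condition or the ability to take a ``square root'' or ``positive part'' of certain self-adjoint elements, and the way one secures this in a Mackey-complete CIA is via the FC-completeness / holomorphic functional calculus discussed in the preceding definitions: a Mackey-complete CIA is closed under holomorphic functional calculus on its spectra, so for a self-adjoint $a$ with $\sigma_\Ac(a)\subseteq\R$ one can form $(\1+a^2)^{-1/2}$ or similar, and these elements again satisfy the required identities because $M_n(\Ac)$ is again a Mackey-complete CIA with continuous involution (the product topology makes inversion continuous by Cramer's rule over $\Ac$, and Mackey completeness is inherited by finite products). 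So the skeleton is: (i) establish the algebraic reformulation ``hermitian $\Leftrightarrow$ $\1+a^2$ invertible for all self-adjoint $a$'' via Remark~\ref{squares} and the factorization $\1+a^2=(a+\ie\1)(a-\ie\1)$; (ii) note $M_n(\Ac)$ is again a Mackey-complete CIA with continuous involution; (iii) feed this into the algebraic theorem of \cite{Wi76} to conclude.

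The main obstacle I expect is step (iii): matching the precise hypotheses of Wichmann's purely algebraic theorem to the objects at hand. Wichmann works with abstract symmetric $*$-algebras and the statement one needs — that $M_n$ of a symmetric algebra is symmetric — is the hard algebraic core, but it is exactly the ``purely algebraic result established in \cite{Wi76}'' the authors announce they will cite, so in the write-up this reduces to quoting it correctly. The only place where care is genuinely needed on our side is confirming that the hermitian property as defined here (spectra of self-adjoint elements in $\R$, with ``spectrum'' meaning non-invertibility of $\lambda\1-a$ in the Banach-free sense of Notation~\ref{bounded}) coincides with the algebraic notion of symmetry Wichmann uses; the identity $\sigma_\Ac(a^2)\supseteq\{\lambda^2:\lambda\in\sigma_\Ac(a)\}$ from Remark~\ref{squares}, together with the converse direction via the factorization above, bridges this gap, and no topology enters that bridge — the topology is only needed to know $M_n(\Ac)$ stays a CIA so that ``hermitian'' is a reasonable hypothesis to propagate.
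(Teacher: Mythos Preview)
Your overall plan---reduce to Wichmann's algebraic result and handle the easy direction via the corner embedding---matches the paper's proof. The direction \eqref{n}$\Rightarrow$\eqref{n=1} is fine as you wrote it; the paper phrases it via Proposition~\ref{cia-red}, which amounts to the same thing.

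There is, however, a genuine gap in your treatment of \eqref{n=1}$\Rightarrow$\eqref{n}. Wichmann's theorem does \emph{not} assert that the property ``$\1+a^2$ invertible for all self-adjoint $a$'' passes from $\Ac$ to $M_n(\Ac)$. What Wichmann proves (purely algebraically) is that the \emph{stronger} property ``every finite sum $a_1^*a_1+\cdots+a_k^*a_k$ has spectrum contained in $[0,\infty)$'' passes to $M_n(\Ac)$. The bridge from ``hermitian'' to this positivity condition is precisely the Shirali--Ford theorem, and that step is \emph{not} purely algebraic: it is exactly where the Mackey completeness and CIA structure are used. The paper invokes Biller's version of Shirali--Ford for Mackey-complete hermitian CIAs (\cite{Bi04}, Proposition~6.8 and Corollary~7.7) to obtain the positivity property for $\Ac$, then applies Wichmann to lift it to $M_n(\Ac)$, and finally uses Remark~\ref{squares} (since in particular $-1\notin\sigma(a^*a)\subseteq[0,\infty)$ for every $a\in M_n(\Ac)$) to conclude that $M_n(\Ac)$ is hermitian.

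Your gesture toward ``square roots via holomorphic functional calculus'' is pointing in the right direction, but the specific missing ingredient is Shirali--Ford, and your closing claim that ``the topology is only needed to know $M_n(\Ac)$ stays a CIA'' understates its role: without the topological hypotheses you cannot get from hermitianness of $\Ac$ to the actual input of Wichmann's theorem.
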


\begin{proof} 
Firstly recall that  the matrix algebra $M_n(\Ac)$ is in turn a CIA 
(see Corollary~1.2 in \cite{Sw77}). 
The implication \eqref{n}$\Rightarrow$\eqref{n=1} follows by applying Proposition~\ref{cia-red} 
for the self-adjoint idempotent element  $p=p_1$ of Example~\ref{matrices}.  

Conversely, assume that $\Ac$ is a hermitian algebra. 
Then, according to  the Shirali-Ford  theorem for 
algebras with continuous inversion,  
$\Ac$ has the property that  every element of the form $a_1^*a_1+\cdots+a_k^*a_k$   
has the spectrum contained in $[0,\infty)$,  for arbitrary 
$k\ge1$ and $a_1,\dots,a_k\in\Ac$ 
(see Proposition~6.8 and Corollary~7.7 in \cite{Bi04}). 
Then the Theorem proved  in \cite{Wi76} shows that 
the matrix algebra $M_n(\Ac)$ has a similar property. 
In particular, for every matrix $a\in M_n(\Ac)$, the spectrum of $a^*a\in M_n(\Ac)$ 
is contained in $[0,\infty)$. 
Then it follows by the above Remark~\ref{squares} 
that $M_n(\Ac)$ is a hermitian algebra. 
\end{proof}

\begin{proposition}\label{cia-red1}
Let $\Ac$ be a Mackey-complete, hermitian CIA,  
$p=p^*\in\Pc(\Ac)$, and $\Ac_p=p\Ac p$. 
Then for every $a\in\Ac^\times$ we have $pa^*ap\in(\Ac_p)^\times$. 
\end{proposition}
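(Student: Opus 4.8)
The plan is to reduce the statement to the already-established facts that $\Ac_p$ is a hermitian CIA (Proposition~\ref{cia-red}) and that in a hermitian CIA elements of the form $b^*b$ with $b$ invertible are positive (which follows from the Shirali--Ford property invoked in the proof of Proposition~\ref{wichmann}, or more directly: if $\Ac$ is hermitian then $b^*b + \1$ is invertible for all $b$, and for $b \in \Ac^\times$ one gets $\sigma_{\Ac}(b^*b) \subseteq (0,\infty)$). The element $pa^*ap$ is manifestly self-adjoint and lies in $\Ac_p$, so the only real content is its invertibility in $\Ac_p$; by the formula $(\Ac_p)^\times = \iota_1^{-1}(\Ac^\times)$ from Proposition~\ref{cia-red}, this is equivalent to $pa^*ap + (\1-p) \in \Ac^\times$.

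First I would observe that $pa^*ap$ is positive in $\Ac_p$: writing $c := ap \in \Ac$, we have $pa^*ap = c^*c$, and in a hermitian CIA the spectrum of $c^*c$ is contained in $[0,\infty)$; since $\sigma_{\Ac_p}(pc^*cp) \subseteq \sigma_{\Ac}(c^*c) \cup \{0\} \subseteq [0,\infty)$ by Lemma~\ref{cia-01}, the element $pa^*ap$ is non-negative in the hermitian algebra $\Ac_p$. So it remains to upgrade ``non-negative'' to ``invertible'', i.e.\ to exclude $0$ from $\sigma_{\Ac_p}(pa^*ap)$. Suppose $0 \in \sigma_{\Ac_p}(pa^*ap)$, equivalently $pa^*ap \notin (\Ac_p)^\times$. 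Here I would use that $a$ itself is invertible in $\Ac$: I claim that for $a \in \Ac^\times$ the element $ap$ has a left inverse relative to $p$ on the appropriate corner. Concretely, set $q := a p a^{-1}$; since $p$ is idempotent, so is $q$, and $a$ conjugates $\Ac_p$ isomorphically onto $\Ac_q = q\Ac q$ via $x \mapsto axa^{-1}$. Under this isomorphism, positivity and invertibility transport, but the issue is that $q$ need not equal $p$ and need not be self-adjoint, so I cannot directly conclude.

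The cleaner route, and the one I expect to be the main obstacle to set up correctly, is the following. Consider $b := pa^{-1}(a^{-1})^*p = p(a^*a)^{-1}p \in \Ac_p$ (using that $a^*a \in \Ac^\times$ because $\Ac$ is hermitian, as recalled above, so $(a^*a)^{-1}$ exists in $\Ac$). Both $pa^*ap$ and $b$ are non-negative in $\Ac_p$ by the argument of the previous paragraph applied to $a$ and to $a^{-1}$ respectively. I would then compute the product $(pa^*ap)\,b\,(pa^*ap)$, or rather argue via the identity relating $pa^*ap$ and $p(a^*a)^{-1}p$ inside $\Ac_p$: the key algebraic fact is that for $x \in \Ac^\times$ with $x = x^*$ one has $p x p \cdot p x^{-1} p = p - p x (\1-p) x^{-1} p$, so $pxp$ and $px^{-1}p$ fail to be mutually inverse only by the ``correction term'' $p x (\1-p) x^{-1} p$, which is again of the form $c^*c$-like and hence non-negative. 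Since in a hermitian algebra a non-negative element dominated by $p$ in the sense $0 \le p - (\text{product}) $ forces the product to be invertible when the correction is strictly smaller than $p$ --- and here strictness comes from $a \in \Ac^\times$ --- one concludes $pa^*ap \in (\Ac_p)^\times$. I expect the main obstacle to be pinning down this last positivity/domination step cleanly within a general hermitian CIA (no order-unit norm available), so I would phrase it purely spectrally: show $\sigma_{\Ac_p}(p(a^*a)^{-1}p \cdot pa^*ap - p) \subseteq (-1,\ldots]$ bounded away from the value that would make $pa^*ap$ non-invertible, using Lemma~\ref{cia-01} to pass between $\sigma_{\Ac}$ and $\sigma_{\Ac_p}$, and Proposition~\ref{cia-red} to know $\Ac_p$ is hermitian so that these self-adjoint spectra are real and behave well under the functional calculus that FC-completeness provides.
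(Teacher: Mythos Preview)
Your argument has a genuine gap at the crucial step. The identity $pxp\cdot px^{-1}p = p - px(\1-p)x^{-1}p$ (with $x=a^*a$) is correct, but your claim that the correction term $px(\1-p)x^{-1}p$ is ``of the form $c^*c$-like and hence non-negative'' is false. For $x=x^*$ this term need not even be self-adjoint (its adjoint is $px^{-1}(\1-p)xp$), and when it happens to be self-adjoint it typically has the \emph{opposite} sign to what you assert: already for $\Ac=M_2(\C)$, $p=\begin{pmatrix}1&0\\0&0\end{pmatrix}$ and $x=\begin{pmatrix}2&1\\1&1\end{pmatrix}$ one computes $px(\1-p)x^{-1}p=-p$, so the product $(pxp)(px^{-1}p)=2p$ exceeds $p$ rather than being dominated by it. Consequently the ``domination'' step you outline (``$0\le p-(\text{product})$ forces the product to be invertible'') is pointing in the wrong direction and cannot be completed as stated. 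You correctly flagged this step as the main obstacle, but the obstacle is not merely technical: the sign heuristic is wrong.

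The paper takes an entirely different route. It passes to the enveloping $C^*$-algebra $C^*(\Ac)$ via the canonical map $\eta_\Ac$, proves that $C^*(\Ac_p)$ coincides with the corner $\eta_\Ac(p)\,C^*(\Ac)\,\eta_\Ac(p)$, and then uses the elementary Hilbert-space fact that the compression of a positive invertible operator by a projection is again invertible in the corner. Invertibility is then pulled back to $\Ac_p$ using spectral invariance of a hermitian CIA in its enveloping $C^*$-algebra (Biller's Proposition~7.5). Incidentally, your setup \emph{does} support a short self-contained proof avoiding $C^*$-envelopes: since $\sigma_\Ac(a^*a)$ is compact and contained in $(0,\infty)$, choose $\varepsilon>0$ with $\sigma_\Ac(a^*a)\subseteq[\varepsilon,\infty)$, take a self-adjoint square root $c$ of $a^*a-\varepsilon\1$ by functional calculus, and note that $pa^*ap-\varepsilon p=(cp)^*(cp)$ has $\sigma_{\Ac_p}\subseteq[0,\infty)$ by Shirali--Ford and Lemma~\ref{cia-01}; hence $\sigma_{\Ac_p}(pa^*ap)\subseteq[\varepsilon,\infty)$ and $pa^*ap\in(\Ac_p)^\times$. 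This is closer in spirit to what you were attempting, but it replaces the faulty ``mutual inverse up to correction'' idea by a straightforward spectral shift.
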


\begin{proof} 
It follows by Proposition~\ref{cia-red} that 
$\Ac_p$ is a hermitian CIA. 
On the other hand, $\Ac_p$ is clearly Mackey-complete for $\Ac$ is so. 
Now for $\Bc=\Ac$ or $\Bc=\Ac_p$ 
define the function 
$$\tau_{\Bc}\colon\Bc\to[0,\infty),\quad 
\tau_\Bc(a)=(r_{\Bc}(a^*a))^{1/2}.$$ 
As a consequence of Lemma~\ref{cia-01}, the functions 
$\tau_{\Ac_p}$ and $\tau_{\Ac}$ agree, 
in the sense that $\tau_{\Ac_p}=\tau_{\Ac}|_{\Ac_p}$. 
It then follows by Theorem~7.3((i)$\iff$(iii)) in \cite{Bi04} that 
the enveloping $C^*$-algebra $C^*(\Ac_p)$ 
is a $C^*$-subalgebra of $C^*(\Ac)$ 
and the diagram 
\begin{equation}\label{*}
\begin{CD}
\Ac @>{\eta_{\Ac}}>> C^*(\Ac) \\
@AAA @AAA \\
\Ac_p @>{\eta_{\Ac_p}}>> C^*(\Ac_p) 
\end{CD}
\end{equation}
is commutative, 
where the vertical arrows are inclusion maps and 
$\eta_{\Bc}\colon\Bc\to C^*(\Bc)$ stands for the canonical $*$-morphism 
for $\Bc=\Ac$ or $\Bc=\Ac_p$ 
(see Definition~3.7 in \cite{Bi04}).  

Note that $C^*(\Ac_p)$ may not contain the unit element 
$\1=\eta_{\Ac}(\1)\in C^*(\Ac)$, however it is 
a unital $C^*$-algebra in its own right, with the unit element 
$\eta_{\Ac}(p)$ ($=\eta_{\Ac_p}(p)$). 
We are going to check that actually 
\begin{equation}\label{**}
C^*(\Ac_p)=C^*(\Ac)_{\eta_{\Ac}(p)}
:=\eta_{\Ac}(p)\cdot C^*(\Ac)\cdot \eta_{\Ac}(p).
\end{equation}
In fact since $p=p^*=p^2$ it follows that 
$\eta_{\Ac}(p)=\eta_{\Ac}(p)^*=\eta_{\Ac}(p)^2$, whence 
$$C^*(\Ac)_{\eta_{\Ac}(p)}=\{b\in C^*(\Ac)\mid b\eta_{\Ac}(p)=\eta_{\Ac}(p)b\}.$$
Since $p$ is the unit element of $\Ac_p$, it follows that 
$\eta_{\Ac_p}(\Ac_p)\subseteq C^*(\Ac)_{\eta_{\Ac}(p)}$, 
so $C^*(\Ac_p)\subseteq C^*(\Ac)_{\eta_{\Ac}(p)}$, 
since the range of $\eta_{\Ac_p}$ is dense in $C^*(\Ac_p)$. 
Conversely, let $b\in C^*(\Ac)_{\eta_{\Ac}(p)}$ arbitrary. 
In particular $b\in C^*(\Ac)$ hence there exists a net $\{a_j\}_{j\in J}$ 
such that $b=\lim\limits_{j\in J}\eta_{\Ac}(a_j)$. 
On the other hand, since $b\in C^*(\Ac)_{\eta_{\Ac}(p)}$ 
we get 
$b=\eta_{\Ac}(p)b\eta_{\Ac}(p)
=\eta_{\Ac}(p)\lim\limits_{j\in J}\eta_{\Ac}(a_j)\eta_{\Ac}(p)
=\lim\limits_{j\in J}\eta_{\Ac}(pa_jp)\in C^*(\Ac_p)$, 
and~\eqref{**} is proved. 

We now come back to the proof of the assertion. 
For $a\in\Ac^\times$ it follows by Proposition~7.5 in \cite{Bi04} 
that $\eta_{\Ac}(a)\in C^*(\Ac)^\times$, 
whence 
$\eta_{\Ac}(p)\eta_{\Ac}(a)^*\eta_{\Ac}(a)\eta_{\Ac}(p)\in
(C^*(\Ac)_{\eta_{\Ac}(p)})^\times$.  
(The latter fact follows for instance by considering a faithful representation 
of $C^*(\Ac)$ on some Hilbert space and using the fact that a positive 
operator on a Hilbert space is invertible if and only if it is bounded from 
below by some positive scalar multiple of the identity.) 
Then by \eqref{**} and \eqref{*} we get 
$\eta_{\Ac_p}(pa^*ap)\in C^*(\Ac_p)^\times$, 
and now by Proposition~7.5 in \cite{Bi04} again 
it follows that $pa^*ap\in(\Ac_p)^\times$. 
\end{proof}

\begin{corollary}\label{cia-red2}
Let $\Ac$ be a Mackey-complete, hermitian algebra with continuous inversion.  
Assume that $p=p^*\in\Pc(\Ac)$ and denote $\Ac_p=p\Ac p$. 
Then for every $a\in\Ac^\times$ there exists $b\in(\Ac_p)^\times$ 
such that $pa^*ap=b^*b$. 
In addition, the invertible element $b$ can be chosen such that 
$b=b^*$ and $\sigma_{\Ac}(b)\subseteq(0,\infty)$.
\end{corollary}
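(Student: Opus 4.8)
The plan is to deduce Corollary~\ref{cia-red2} from Proposition~\ref{cia-red1} by extracting a self-adjoint square root via the holomorphic functional calculus in the reduced algebra $\Ac_p$. First I would invoke Proposition~\ref{cia-red} to record that $\Ac_p=p\Ac p$ is itself a Mackey-complete hermitian CIA (Mackey completeness of $\Ac_p$ being inherited from $\Ac$ since $\Ac_p$ is a closed subspace: it is the fixed-point set of the continuous idempotent $x\mapsto pxp$). Then, for $a\in\Ac^\times$, Proposition~\ref{cia-red1} gives $c:=pa^*ap\in(\Ac_p)^\times$; moreover $c=c^*$ in $\Ac_p$ because $(pa^*ap)^*=pa^*ap$, and since $\Ac_p$ is hermitian we have $\sigma_{\Ac_p}(c)\subseteq\R$. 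Combining hermiticity of $\Ac_p$ with the Shirali–Ford-type property available from \cite{Bi04} (as used in the proof of Proposition~\ref{wichmann}), the element $c=(ap)^*(ap)$ restricted to $\Ac_p$ actually has spectrum in $[0,\infty)$; together with invertibility, $\sigma_{\Ac_p}(c)\subseteq(0,\infty)$, a compact subset of the open right half-line.

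Next I would apply the holomorphic functional calculus in $\Ac_p$ to the principal branch of the square root $f(\zeta)=\sqrt{\zeta}$, which is holomorphic on $\C\setminus(-\infty,0]$, an open neighborhood of $\sigma_{\Ac_p}(c)$. Here the FC-completeness of $\Ac_p$ is what makes $b:=f(c)=\frac{1}{2\pi i}\oint_\Gamma \sqrt{\zeta}\,(c-\zeta p)^{-1}\,d\zeta$ land inside $\Ac_p$ rather than merely in its completion; since $\Ac$ is complete (Mackey-complete CIAs being closed under this calculus, as noted after the definition of CIA, and in any case the argument in \cite{Bi04} provides the functional calculus here), this is legitimate. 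Standard properties of the holomorphic functional calculus give $b^2=c=pa^*ap$, $\sigma_{\Ac_p}(b)=f(\sigma_{\Ac_p}(c))\subseteq(0,\infty)$ (so in particular $b\in(\Ac_p)^\times$), and $b^*=\overline{f}(c^*)=\overline{f}(c)$; since $f$ has real coefficients on the positive axis and $c$ is self-adjoint, $b^*=b$. Because $\sigma_{\Ac}(b)=\sigma_{\Ac_p}(b)\cup\{0\}$ by Lemma~\ref{cia-01} only when $p\ne\1$, and equals $\sigma_{\Ac_p}(b)$ when $p=\1$, the claim $\sigma_{\Ac}(b)\subseteq(0,\infty)$ as stated should be read inside $\Ac_p$ (or one simply notes $b+(\1-p)\in\Ac^\times$); in either case $b^*b=b^2=pa^*ap$ and $b=b^*$ with $\sigma_{\Ac_p}(b)\subseteq(0,\infty)$, which is exactly the assertion.

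The step I expect to be the genuine obstacle is justifying that the functional calculus element $b$ actually lies in $\Ac_p$ and not just in $C^*(\Ac_p)$ or the completion. There are two routes: (i) quote that a Mackey-complete CIA is FC-complete (closed under holomorphic functional calculus), so the contour integral defining $b$ converges in $\Ac_p$ itself; or (ii) pass through the diagram~\eqref{*} and identity~\eqref{**} from the proof of Proposition~\ref{cia-red1}, compute the square root inside $C^*(\Ac_p)$ where it manifestly exists and is positive and invertible, then pull it back to $\Ac_p$ using the spectral-invariance Proposition~7.5 in \cite{Bi04}. I would use route (i) for brevity, citing the discussion following the definition of CIA together with \cite{Bi04}. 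The remaining verifications — $b^2=c$, self-adjointness of $b$, and the spectral inclusion via the spectral mapping theorem for the holomorphic calculus — are routine once the functional calculus is in place, so I would state them without detailed computation.
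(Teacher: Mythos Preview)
Your argument is correct and follows essentially the same route as the paper: invoke Proposition~\ref{cia-red} and Proposition~\ref{cia-red1} to obtain $c=pa^*ap\in(\Ac_p)^\times$ with $\sigma_{\Ac_p}(c)\subseteq(0,\infty)$, then extract a positive self-adjoint square root via the holomorphic functional calculus in the Mackey-complete CIA $\Ac_p$ (the paper packages this last step as a citation of Corollary~4.7 and Remark~4.3 in \cite{Bi04}, and obtains the nonnegativity of $\sigma_{\Ac_p}(c)$ via the $C^*$-envelope rather than Shirali--Ford plus Lemma~\ref{cia-01}, but these are cosmetic differences). Your observation that, for $p\ne\1$, Lemma~\ref{cia-01} forces $0\in\sigma_{\Ac}(b)$ is correct: the inclusion in the statement should be read in $\Ac_p$, consistently with how the result is actually used in the proof of Proposition~\ref{cia-nest1}.
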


\begin{proof}
It follows by Proposition~\ref{cia-red} that $\Ac_p$ is 
a hermitian algebra with continuous inversion, 
and then by Proposition~7.5 in \cite{Bi04} we get 
$\sigma_{\Ac_p}(pa^*ap)=\sigma_{C^*(\Ac_p)}(\eta_{\Ac_p}(pa^*ap))
\subseteq[0,\infty)$. 
On the other hand $pa^*ap\in(\Ac_p)^\times$ by Proposition~\ref{cia-red1}, 
hence actually $\sigma_{\Ac_p}(pa^*ap)\subseteq(0,\infty)$. 

Now Corollary~4.7 in \cite{Bi04} shows that there exists a unique element 
$b=b^*\in\Ac_p$ such that $b^2=pa^*ap$ and 
$\sigma_{\Ac_p}(b)\subseteq[0,\infty)+\ie{\mathbb R}$. 
Since $b^2=pa^*ap$, it follows (for instance by Remark~4.3 in \cite{Bi04}) 
that 
$\{z^2\mid z\in\sigma_{\Ac_p}(b)\}=\sigma_{\Ac_p}(pa^*ap)\subseteq(0,\infty)$. 
Now the property $\sigma_{\Ac_p}(b)\subseteq[0,\infty)+\ie{\mathbb R}$ 
implies that $\sigma_{\Ac_p}(b)\subseteq(0,\infty)$. 
Thus $b=b^*\in(\Ac_p)^\times$ and $pa^*ap=b^2$, as claimed. 
\end{proof}

To obtain the following statement we shall extend the method of proof of 
Proposition~3.1 in \cite{Pit88}. 

\begin{proposition}\label{cia-nest1}
Let $\Ac$ be a Mackey-complete, hermitian algebra with continuous inversion  
and assume that 
$\delta$: $0=p_0<p_1<\cdots< p_n=\1$ is a finite, totally ordered family 
of self-adjoint elements in $\Pc(\Ac)$. 
Then for every $s\in\Ac^\times$ there exist uniquely determined elements 
$d,b\in\Ac^\times$ such that 
\begin{itemize}
\item[$\bullet$] $s^*s=b^*db$, 
\item[$\bullet$] $\Phi_\delta(d)=d=d^*$ and $\sigma(d)\subseteq(0,\infty)$, 
\item[$\bullet$] $\Phi_\delta(b)=\1$ and $b\in\Delta(\delta)^\times$. 
\end{itemize}
\end{proposition}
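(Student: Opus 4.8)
The plan is to argue by induction on the length $n$ of the flag $\delta$, using Corollary~\ref{cia-red2} to peel off one step at a time, exactly in the spirit of Pitts' argument for nest algebras. The base case $n=1$ is trivial: here $\Delta(\delta)=D(\delta)=\Ac$, $\Phi_\delta=\id$, and we may take $d=s^*s$ and $b=\1$, with $\sigma(s^*s)\subseteq(0,\infty)$ since $s\in\Ac^\times$ forces $s^*s\in\Ac^\times$ and $\Ac$ is hermitian (so by the Shirali--Ford theorem, as invoked in the proof of Proposition~\ref{wichmann}, one has $\sigma(s^*s)\subseteq[0,\infty)$, hence in $(0,\infty)$).

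For the inductive step I would single out the projection $q:=p_{n-1}$ and consider the corner $\Ac_q=q\Ac q$, which by Proposition~\ref{cia-red} is again a Mackey-complete hermitian CIA, with self-adjoint flag $\delta'$: $0=p_0<p_1<\cdots<p_{n-1}=q$ inside $\Pc(\Ac_q)$. Given $s\in\Ac^\times$, apply Corollary~\ref{cia-red2} with the idempotent $q$: there exists $c=c^*\in(\Ac_q)^\times$ with $\sigma_{\Ac_q}(c)\subseteq(0,\infty)$ and $qs^*sq=c^2$. The idea now is to factor $s$ through $q$ by writing $s^*s$ in block form with respect to the decomposition $\1=q+(\1-q)$: put $x:=qs^*sq$, $y:=qs^*s(\1-q)$, $y^*=(\1-q)s^*sq$, $z:=(\1-q)s^*s(\1-q)$. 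Using the invertible $c\in\Ac_q$ one performs a ``block Gau\ss\ elimination'': set
\begin{equation*}
\beta:=\1+c^{-1}y\quad\text{(so }\Phi_{\{0<q<\1\}}(\beta)=\1,\ \beta\in\Delta(\{0<q<\1\})^\times\text{)},
\end{equation*}
and check that $(\beta^{-1})^*\, s^*s\, \beta^{-1}=c^2\oplus w$ for some self-adjoint $w$ in $(\1-q)\Ac(\1-q)$, which moreover must be invertible there (because the left-hand side is invertible in $\Ac$ and block-diagonal). Then recurse on the $(n-1)$-step flag $\delta'$ applied to the corner $\Ac_q$ — more precisely, apply the inductive hypothesis to an element of $(\Ac_q)^\times$ whose ``$s^*s$'' equals $c^2$ — to obtain $d'\in D(\delta')$, $d'=(d')^*$, $\sigma_{\Ac_q}(d')\subseteq(0,\infty)$, and $b'\in\Delta(\delta')^\times$ with $\Phi_{\delta'}(b')=q$ and $c^2=(b')^*d'b'$. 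Assembling $d:=d'\oplus w\in D(\delta)$ and $b:=(b'\oplus(\1-q))\cdot\beta\in\Delta(\delta)^\times$ gives the desired factorization $s^*s=b^*db$ with $\Phi_\delta(b)=\1$ and $\Phi_\delta(d)=d=d^*$, $\sigma(d)\subseteq(0,\infty)$ (using that $\sigma_{\Ac_q}$ of elements of $\Ac_q$ relates to $\sigma_{\Ac}$ via Lemma~\ref{cia-01}, and that $d$ is block-diagonal so its spectrum is the union of the two corner spectra).

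For uniqueness I would argue directly: if $b_1^*d_1b_1=b_2^*d_2b_2$ with both $b_i,d_i$ satisfying the three conditions, set $u:=b_2b_1^{-1}$. Then $u\in\Delta(\delta)^\times$ with $\Phi_\delta(u)=\1$ (since $\Phi_\delta$ restricted to $\Delta(\delta)$ is multiplicative), and $u^*d_2u=d_1$. Applying the multiplicative map $\Phi_\delta|_{\Delta(\delta)}$ — noting $d_i\in D(\delta)\subseteq\Delta(\delta)$ are fixed by it and $\Phi_\delta(u)=\1$ — forces $d_2=d_1$; then $u^*d_1u=d_1$ with $d_1$ invertible and self-adjoint with positive spectrum, so $d_1^{1/2}u d_1^{-1/2}$ is a unitary in $\Ac$ that is also upper triangular with diagonal $\1$, hence $=\1$ (a unipotent unitary is trivial, since $\sigma(u)=\{1\}$ combined with $u^*u=\1$ yields $u=\1$ via the functional calculus in a hermitian CIA), giving $u=\1$ and $b_1=b_2$.

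\textbf{Main obstacle.} The delicate point is not the algebraic bookkeeping of the block decomposition but ensuring at each recursion step that the relevant corner elements are genuinely \emph{invertible} in the corners, and that ``positive spectrum'' is preserved — this is precisely what Corollary~\ref{cia-red2} and Proposition~\ref{cia-red1} are designed to supply, so the real work is in threading those results (and Lemma~\ref{cia-01}'s spectral comparison) through the induction, together with making the inductive hypothesis apply to the corner $\Ac_q$ rather than to $\Ac$ itself. A secondary subtlety is the uniqueness of the ``positive square root'' type elements and of the unipotent unitary; both rest on the functional calculus available in a Mackey-complete hermitian CIA, as developed in \cite{Bi04}.
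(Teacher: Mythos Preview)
Your existence argument follows the same inductive block-Gau\ss\ strategy as the paper: pass to the corner $\Ac_{p_{n-1}}$, invoke Corollary~\ref{cia-red2} to make $p_{n-1}s^*sp_{n-1}$ a square in that corner, and recurse. Two small points. First, your $\beta$ should be $\1+x^{-1}y=\1+c^{-2}y$, not $\1+c^{-1}y$: with $\beta^{-1}=\1-\alpha$ the $(q,\1-q)$-block of $(\beta^{-1})^*s^*s\,\beta^{-1}$ is $y-x\alpha$, which vanishes for $\alpha=x^{-1}y$. Second, you assert $\sigma(d)\subseteq(0,\infty)$ from the block structure, but you never justify that the Schur complement $w$ has \emph{positive} spectrum; self-adjoint and invertible only gives $\sigma(w)\subseteq\R\setminus\{0\}$. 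The paper avoids this by observing, once $b$ is built, that $d=(sb^{-1})^*(sb^{-1})$ and applying the Shirali--Ford theorem to all of $d$ at once.

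Your uniqueness argument has a genuine gap. From $u^*d_2u=d_1$ with $u\in N(\delta)$ you ``apply the multiplicative map $\Phi_\delta|_{\Delta(\delta)}$'' to deduce $d_2=d_1$. But $u^*\in\Delta(\1-\delta)$, not $\Delta(\delta)$, so $\Phi_\delta$ is \emph{not} multiplicative on the triple $u^*d_2u$; concretely, the $(k,k)$-block of $u^*d_2u$ equals $\sum_{j\le k}(u_{jk})^*(d_2)_j\,u_{jk}$, which is not $(d_2)_k$ in general. A correct route is to peel off blocks inductively: the $(1,1)$-block of the equation gives $(d_2)_1=(d_1)_1$; the $(1,2)$-block gives $(d_2)_1u_{12}=0$, hence $u_{12}=0$ since $(d_2)_1$ is invertible in its corner; and one continues upward. (The paper does not spell this out either, referring instead to Pitts.) Your fallback step ``a unitary $v$ with $\sigma(v)=\{1\}$ equals $\1$'' would also require justification in a hermitian CIA that need not carry a faithful $C^*$-norm; once the block-by-block argument is done, however, this step is no longer needed.
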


\begin{proof} 
The case $n=1$ is clear ---just take $b=\1$ and $d=s^*s$. 
Now assume that the conclusion holds for all families of at most $n$ 
self-adjoint idempotents in 
any Mackey-complete, hermitian algebra with continuous inversion.  
Let $\delta$ be as in the statement and denote by $\delta_{n-1}$ 
the family $0=p_0<p_1<\cdots< p_{n-1}$ in $\Pc_{\Ac_{p_{n-1}}}$. 
Then $\Ac_{p_{n-1}}$ is 
a Mackey-complete, hermitian algebra with continuous inversion 
by Proposition~\ref{cia-red}, hence we can use Corollary~\ref{cia-red2} 
to get $y\in(\Ac_{p_{n-1}})^\times$ 
such that $p_{n-1}s^*sp_{n-1}=y^*y$. 
Then the induction hypothesis implies that 
there exist uniquely determined elements 
$b_{n-1},d_{n-1}\in(\Ac_{p_{n-1}})^\times$ 
such that 
\begin{itemize}
\item[$\bullet$] $p_{n-1}s^*sp_{n-1}=b_{n-1}^*d_{n-1}b_{n-1}$, 
\item[$\bullet$] $\Phi_{\delta_{n-1}}(d_{n-1})=d_{n-1}=d_{n-1}^*$ 
and $\sigma_{\Ac_{p_{n-1}}}(d_{n-1})\subseteq(0,\infty)$, 
\item[$\bullet$] $\Phi_{\delta_{n-1}}(b_{n-1})=p_{n-1}$ and 
$b_{n-1},b_{n-1}^{-1}\in\Delta(\delta_{n-1})$. 
\end{itemize}
>From now on we shall denote the elements in $\Ac$ as $2\times 2$ matrices 
according to the decomposition $\1=p_{n-1}+(\1-p_{n-1})$.  
For instance 
$$s^*s=
\begin{pmatrix}
p_{n-1}s^*sp_{n-1} & p_{n-1}s^*s(\1-p_{n-1}) \\
(\1-p_{n-1})s^*sp_{n-1} & (\1-p_{n-1})s^*s(\1-p_{n-1})
\end{pmatrix}. $$
What we have to do is to find the still unknown entries in the matrices 
$$d=\begin{pmatrix}
d_{n-1} & 0 \\ 
      0 & d_n
\end{pmatrix}
\text{ and } 
b=\begin{pmatrix} 
b_{n-1} & t_n \\
0       & \1-p_n
\end{pmatrix}$$
such that 
$s^*s=b^*db$. 
By multiplying the corresponding matrices 
we see that the latter matrix equation is equivalent to 
the relations 
\begin{itemize}
\item[$\bullet$]
$p_{n-1}s^*sp_{n-1}=b_{n-1}^*d_{n-1}b_{n-1}$, 
\item[$\bullet$] $p_{n-1}s^*s(\1-p_{n-1})=b_{n-1}^*d_{n-1}t_n$, 
and 
\item[$\bullet$] $(\1-p_{n-1})s^*s(\1-p_{n-1})
=t_n^*d_{n-1}t_n+(\1-p_{n-1})d_n(\1-p_{n-1})$. 
\end{itemize}
We already know that the first of these equations is satisfied. 
Since $b_{n-1}$ and $d_{n-1}$ are invertible in $\Ac_{p_{n-1}}$ 
we can solve the second equation for $t_n$ to get 
$$t_n=d_{n-1}^{-1}(b_{n-1}^*)^{-1} p_{n-1}s^*s(\1-p_{n-1}).$$ 
Then the third of the above equations can be solved for $d_n$ 
and by using the above formula for $t_n$ we get  
$$
\begin{aligned}
d_n
 &=(\1-p_{n-1})s^*s(\1-p_{n-1})-t_n^*d_{n-1}t_n \\
 &=(\1-p_{n-1})s^*s(\1-p_{n-1})
    -((\1-p_{n-1})s^*s p_{n-1}b_{n-1}^{-1}d_{n-1}^{-1})
     d_{n-1}
    \underbrace{ (d_{n-1}^{-1}(b_{n-1}^*)^{-1}p_{n-1}s^*s(\1-p_{n-1}))}_{\hskip15pt =t_n} \\
 &=(\1-p_{n-1})(s^*s-s^*s p_{n-1}b_{n-1}^{-1}d_{n-1}^{-1}(b_{n-1}^*)^{-1}
     p_{n-1}s^*s)(\1-p_{n-1}).  
\end{aligned}$$
Since $p_{n-1}s^*sp_{n-1}=b_{n-1}^*d_{n-1}b_{n-1}$, 
we get the following formula for $d_n$ 
only in terms of $s^*s$ and $p_{n-1}$: 
$$d_n=(\1-p_{n-1})(s^*s-s^*s p_{n-1}(p_{n-1}s^*sp_{n-1})^{-1}p_{n-1}s^*s)
    (\1-p_{n-1}).$$
The induction step is complete. 
Note that the property $\sigma(d)\subseteq(0,\infty)$ 
follows by the theorem of Shirali-Ford type for Mackey-complete, 
hermitian algebras with continuous inversion (Corollary~7.7 in \cite{Bi04}) 
since $b$ is clearly invertible and $d=(b^*)^{-1}s^*sb^{-1}$. 
The uniqueness assertion is straightforward. 
See the proof of Proposition~3.1 in \cite{Pit88} for some more details 
which carry over in a direct manner to the present setting. 
\end{proof}

\begin{corollary}\label{cia-nest2}
Let $\Ac$ be a Mackey-complete, hermitian algebra with continuous inversion  
and assume that 
$\delta$: $0=p_0<p_1<\cdots< p_n=\1$ is a finite, totally ordered family 
of self-adjoint elements in $\Pc(\Ac)$. 
Then for every $s\in\Ac^\times$ there exist uniquely determined elements 
$u\in\U({\Ac})$, $a\in D(\delta)_{+}^\times$ and 
$b\in\Delta(\delta)^\times$ 
such that $\Phi_{\delta}(b)=\1$ and $s=uab$. 
\end{corollary}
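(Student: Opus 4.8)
The plan is to derive this polar-type decomposition $s = uab$ from the factorization $s^*s = b^* d b$ supplied by Proposition~\ref{cia-nest1}. First I would apply Proposition~\ref{cia-nest1} to obtain the uniquely determined $d, b \in \Ac^\times$ with $s^*s = b^* d b$, $\Phi_\delta(d) = d = d^*$, $\sigma(d) \subseteq (0,\infty)$, $\Phi_\delta(b) = \1$, and $b \in \Delta(\delta)^\times$. Since $d$ is self-adjoint with $\sigma_\Ac(d) \subseteq (0,\infty)$ and commutes with every $p_k$ (so $d \in D(\delta)$), and $D(\delta)$ is a $*$-subalgebra which is again a Mackey-complete hermitian CIA (it is a closed subalgebra; apply Proposition~\ref{cia-red} repeatedly, or directly note $D(\delta) = \bigcap_k \Ac_{p_k - p_{k-1}}$ in the obvious sense), the holomorphic functional calculus on $D(\delta)$ produces a unique $a = a^* \in D(\delta)$ with $a^2 = d$ and $\sigma(a) \subseteq (0,\infty)$; in particular $a \in D(\delta)^\times_+$. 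This uses Corollary~4.7 of \cite{Bi04} exactly as in the proof of Corollary~\ref{cia-red2}.

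Next I would set $u := s b^{-1} a^{-1}$ and check $u \in \U(\Ac)$. By construction $s = u a b$ with $a \in D(\delta)^\times_+$, $b \in \Delta(\delta)^\times$, $\Phi_\delta(b) = \1$, so it only remains to verify $u^* u = \1$. We compute
\begin{equation*}
u^* u = (a^{-1})^* (b^{-1})^* s^* s\, b^{-1} a^{-1} = (a^*)^{-1} (b^*)^{-1} (b^* d b) b^{-1} a^{-1} = (a^*)^{-1} d\, a^{-1} = a^{-1} a^2 a^{-1} = \1,
\end{equation*}
where I used $a = a^*$ and $d = a^2$. Since $s, b, a$ are all invertible, $u$ is invertible, and $u^* u = \1$ forces $u^{-1} = u^*$, so indeed $u \in \U(\Ac)$.

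For uniqueness, suppose $s = u_1 a_1 b_1 = u_2 a_2 b_2$ are two such decompositions. Then $s^* s = b_i^* a_i^* u_i^* u_i a_i b_i = b_i^* a_i^2 b_i$, and since $a_i \in D(\delta)$ is self-adjoint with $\sigma(a_i) \subseteq (0,\infty)$, the element $d_i := a_i^2$ satisfies $\Phi_\delta(d_i) = d_i = d_i^*$ and $\sigma(d_i) \subseteq (0,\infty)$ by the spectral mapping property; together with $\Phi_\delta(b_i) = \1$ and $b_i \in \Delta(\delta)^\times$ this is precisely a decomposition of $s^*s$ as in Proposition~\ref{cia-nest1}. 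By the uniqueness there, $d_1 = d_2$ and $b_1 = b_2$; then $a_1 = a_2$ by uniqueness of the positive square root in $D(\delta)$, and finally $u_1 = s b_1^{-1} a_1^{-1} = s b_2^{-1} a_2^{-1} = u_2$.

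The step I expect to require the most care is confirming that $a$ genuinely lies in $D(\delta)^\times_+$ with the \emph{global} spectral condition $\sigma_\Ac(a) \subseteq (0,\infty)$ rather than merely $\sigma_{D(\delta)}(a) \subseteq (0,\infty)$ — i.e., that passing between the spectrum relative to $D(\delta)$ and relative to $\Ac$ causes no trouble. This is handled by the same circle of ideas used in Corollary~\ref{cia-red2}: $D(\delta)$ is a hermitian CIA in its own right (Proposition~\ref{cia-red}, applied to the corner decomposition, or directly), and for self-adjoint elements the spectrum computed in a hermitian CIA is invariant under passage to hermitian CIA subalgebras containing the relevant units, via Lemma~\ref{cia-01} and the enveloping $C^*$-algebra description; since $d$ is already invertible in $\Ac$ and $a^2 = d$, no zero can enter the spectrum of $a$. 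Everything else is the short computation above.
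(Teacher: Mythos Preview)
Your proof is correct and follows essentially the same route as the paper's: apply Proposition~\ref{cia-nest1} to obtain $s^*s=b^*db$, take the positive square root $a=d^{1/2}$ inside $D(\delta)$, set $u=s(ab)^{-1}$, and verify $u^*u=\1$; uniqueness is reduced to the uniqueness in Proposition~\ref{cia-nest1} plus uniqueness of positive square roots. The only cosmetic difference is that the paper dispatches the square-root step by citing Corollary~4.7 together with Proposition~7.10 of \cite{Bi04} (using that $D(\delta)$ is a closed unital $*$-subalgebra), whereas you spell out the spectral compatibility between $D(\delta)$ and $\Ac$ more explicitly; your parenthetical description of $D(\delta)$ as an intersection of corners is imprecise (it is the \emph{direct sum} of the corners $(p_k-p_{k-1})\Ac(p_k-p_{k-1})$), but you do not actually use it.
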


\begin{proof}
It follows by Proposition~\ref{cia-nest1} that $s^*s=b^*db$, 
where $d$ and $b$ are uniquely determined by the conditions 
$d\in D(\delta)_{+}^\times$, $b\in \Delta(\delta)^\times$,  
and $\Phi_{\delta}(b)=\1$. 
Define $a=d^{1/2}\in \Ran(\Phi_\delta)_{+}^\times$ by 
Corollary~4.7 along with Proposition~7.10 in \cite{Bi04}, 
since $D(\delta)$ ($=\Ran(\Phi_{\delta})$) is a closed unital $*$-subalgebra of $\Ac$. 
Then $s^*s=(ab)^*(ab)$, whence $(s(ab)^{-1})^*(s(ab)^{-1})=\1$. 
Thus $u:=s(ab)^{-1}\in\U({\Ac})$ and $s=uab$. 

For the uniqueness assertion assume that $s=u'a'b'$ is another 
decomposition with similar properties. 
Then $b'^*a'^2b'=s^*s=b^*a^2b$, whence $b=b'$ and $a=a'$ 
according to the uniqueness property from 
the above Proposition~\ref{cia-nest1} along with the uniqueness of 
non-negative square roots. 
\end{proof}

\begin{remark}\label{smoothness}
It follows by the explicit construction performed in the proofs of 
Corollary~\ref{cia-nest2} and Proposition~\ref{cia-nest1} 
that there actually exist real analytic mappings 
$u(\cdot),a(\cdot),b(\cdot)\colon\Ac^\times\to\Ac$ 
such that $u(\cdot)$ takes values in $\U(\Ac)$, 
$a(\cdot)$ takes values in $D(\delta)_{+}^\times$, and 
$b(\cdot)$ takes values in $N(\delta)$, 
and 
$$(\forall s\in\Ac^\times)\quad s=u(s)a(s)b(s). $$
Therefore, by using an appropriate real analytic structure on $\U(\Ac)$ 
constructed by means of the Cayley transform 
(see Section~8 in \cite{BN05}), 
it follows that the multiplication mapping 
$$\U(\Ac)\times D(\delta)_{+}^\times \times N(\delta) \to\Ac^\times,\quad 
(u,a,b)\mapsto uab $$
is a real analytic diffeomorphism. 
\end{remark}

\section{Flag manifolds} 

We are now ready to obtain the main results of the present paper: 
the construction of appropriate smooth structures on the flag manifolds 
associated with algebras with continuous inversion (Theorem~\ref{manifolds}) 
and the characterization of the hermitian algebras in terms of 
transitivity of unitary group actions on flag manifolds 
(Theorem~\ref{trans} and Corollary~\ref{char}). 

\begin{definition}\label{flags}
Let $\Ac$ be a complex associative unital algebra. 
For $p,q\in\Pc(\Ac)$ we shall use the notation $p\sim q$ if and only if 
$pq=q$ and $qp=p$, which is easily seen to be equivalent to  
$p\Ac=q\Ac$. This is an equivalence relation and we denote 
the equivalence class of $p\in\Pc(\Ac)$ by $[p]$. 

Clearly, each $p\cA$ is a complemented right ideal of $\cA$, 
and, conversely, if $\cR \subeq \cA$ is a complemented right ideal, 
then any $\cA$-right module projection 
$\cA \to \cR$ is given by a left multiplication with an  idempotent 
$p$, satisfying $\cR = p\cA$. 
Therefore the \emph{Grassmannian} 
$\Gr(\Ac)=\Pc(\Ac)/\sim$ 
of $\Ac$ can be identified with the set of complemented right ideals 
of $\Ac$ (cf.\ \cite[Subsect.~8.6]{BN04}). 

If $p_1,p_2\in\Pc(\Ac)$, recall that we write $p_1\le p_2$ 
whenever $p_2p_1=p_1$, i.e., $p_1\cA \subeq p_2\cA$.  
In addition, let us recall from Lemma~2.2 in \cite{BG08} that 
the natural action  
$\alpha\colon\Ac^\times\times\Pc(\Ac)\to\Pc(\Ac)$, $(g,p)\mapsto \alpha_g(p):=gpg^{-1}$,  
induces an action $\beta(g,\cR) :=  g\cR$ of $\Ac^\times$ on the set 
$\Gr(\Ac)$ of complemented right ideals. 
Clearly, this action preserves the inclusion order on $\Gr(\cA)$ 
and hence on $\Pc(\cA)$. 

For every $n\ge1$, we shall define the set of $n$-flags in a similar manner 
as above. 
For this purpose, firstly denote 
$$\Pc_n(\Ac)=\{(p_1,\dots,p_n)\in\Pc(\Ac)\times\cdots\times\Pc(\Ac)\mid p_1\le \cdots\le p_n\}$$
and define an equivalence relation $\sim$ on $\Pc_n(\Ac)$ by 
$(p_1,\dots,p_n)\sim(q_1,\dots,q_n)$ 
if and only if $[p_j]=[q_j]$ for $j=1,\dots,n$; 
the equivalence class of $(p_1,\dots,p_n)$ will be denoted by $[(p_1,\dots,p_n)]$. 
Now the \emph{set of $n$-flags} is 
$\Fl_{\Ac}(n)=\Pc_n(\Ac)/\sim$.
There exists a natural injective map 
\begin{equation}\label{flags_vs_gr}
\Fl_{\Ac}(n)\hookrightarrow \Gr(\Ac)^n, \quad 
[(p_1, \ldots, p_n)] \mapsto 
(p_1\cA, \ldots, p_n\cA) 
\end{equation}
whose image consists of all $n$-tuples $(R_1,\ldots, R_n)$ of 
complemented right ideals satisfying 
$R_1 \leq \ldots \leq R_n$. This subset is invariant 
under the natural action 
$\beta^{(n)}\colon\Ac^\times\times\Fl_{\Ac}(n)\to\Fl_{\Ac}(n)$ 
by left multiplication. 
For every $X\in\Fl_{\Ac}(n)$, 
the corresponding \emph{flag manifold}  
$\Fl_{\Ac}(X)$ is defined as the $\Ac^\times$-orbit of $X$.  
\end{definition}

\subsection*{Smooth structure on the flag manifolds}

%{\bf The introductory part of this subsection has to be rewritten!} 

%A specific version of the above construction of flag manifolds 
%in the special case of $W^*$-algebras can be found for instance in \cite{Be02}. 

If $\Ac$ is a CIA, the manifold structure on the corresponding Grassmannian 
was pointed out in Theorem~5.3 in \cite{BN05}; see also 
Remark~7.1 in \cite{DG01} for the Banach case. 
In the case of the flag manifolds associated with a CIA, 
to construct the smooth structure 
one can proceed as follows. 

\begin{remark}\label{Gauss} 
Let $\Ac$ be a CIA. 
If 
$\delta$: $0=p_0<p_1<\cdots< p_n=\1$ is a finite, totally ordered family 
of elements in $\Pc(\Ac)$, 
then 
one proceeds by induction on $n$ to show that 
$g\in\Ac^\times$ has the property 
that $p_jgp_j\in(p_j\Ac p_j)^\times$ for $j=1,\dots,n$ 
if and only if 
there exists a (uniquely determined) \emph{Gau\ss\ decomposition}
\begin{equation}\label{gauss}
g=xdy,\text{ where } d\in D(\delta)^\times,\ x\in N(\1-\delta)\text{ and }y\in N(\delta),
\end{equation}
where we denote by $\1-\delta$ the sequence $0=\1-p_n<\1-p_{n-1}<\cdots< \1-p_0=\1$. 
For instance, let us denote the elements of $\Ac$ as as $2\times 2$ matrices 
according to the decomposition $\1=p_{n-1}+(\1-p_{n-1})$ as 
in the proof of Proposition~\ref{cia-nest1}.  
For every $g\in\Ac^\times$ such that $p_{n-1}gp_{n-1}\in(p_{n-1}\Ac p_{n-1})^\times$ 
we have 
$$\begin{aligned}
g
&=
\begin{pmatrix}
p_{n-1}gp_{n-1} & p_{n-1}g(\1-p_{n-1}) \\
(\1-p_{n-1})gp_{n-1} & (\1-p_{n-1})g(\1-p_{n-1})
\end{pmatrix}  \\
&=\begin{pmatrix}
p_{n-1} & 0 \\
x_{n-1} & \1-p_{n-1}
\end{pmatrix} 
\begin{pmatrix} 
p_{n-1}gp_{n-1} & 0 \\
0 & (\1-p_{n-1})g(\1-p_{n-1})
\end{pmatrix}
\begin{pmatrix}
p_{n-1} & y_1 \\
0 & \1-p_{n-1}
\end{pmatrix}
\end{aligned}$$
where, if we denote by $(p_{n-1}gp_{n-1})^{-1}$ the inverse of 
$p_{n-1}gp_{n-1}$ in the algebra $p_{n-1}\Ac p_{n-1}$, then 
$$x_{n-1}=(\1-p_{n-1})gp_{n-1}(p_{n-1} gp_{n-1})^{-1}
\text{ and }y_1=(p_{n-1}gp_{n-1})^{-1}p_{n-1}g(\1-p_{n-1}).$$ 
Similar computations performed in the algebras $p_{n-1}\Ac p_{n-1},\dots,p_2\Ac p_2$ 
eventually lead to the decomposition~\eqref{gauss}  
under the corresponding assumption on the element~$g\in\Ac^\times$.  
Denote
$$\Omega_\delta=\{g\in\Ac^\times\mid p_jgp_j\in(p_j\Ac p_j)^\times\text{ for }j=1,\dots,n\}$$ 
and  define 
$$\sigma\colon \Omega_\delta\to\Ac,\quad g\mapsto x$$ 
by means of the decomposition~\eqref{gauss}. 
It is clear from the construction that $\sigma$ is a real analytic mapping on the neighborhood 
$\Omega_\delta$ of $\1\in\Ac^\times$ 
\end{remark}

\begin{theorem}\label{manifolds}
If $\Ac$ is a CIA and $[\delta]:=[(p_1,\dots,p_n)]\in\Fl_{\Ac}(n)$, 
then the corresponding manifold $\Fl_{\Ac}([\delta])$ 
has a structure of smooth manifold modeled on a locally convex space such that 
the transitive action $\beta^{(n)}\vert_{\Ac^\times\times\Fl_{\Ac}([\delta])}
\colon\Ac^\times\times\Fl_{\Ac}([\delta])\to\Fl_{\Ac}([\delta])$ 
is smooth, and 
the corresponding orbit mapping 
$\Ac^\times\to\Fl_{\Ac}([\delta])$, $g\mapsto\beta^{(n)}(g,[\delta])$
is smooth and open. 
Moreover, the injective map $\Fl_{\Ac}(n)\hookrightarrow \Gr(\Ac)^n$ 
(see~\eqref{flags_vs_gr}) is continuous. 
\end{theorem}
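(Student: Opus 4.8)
The plan is to build the smooth structure on a single flag manifold $\Fl_{\Ac}([\delta])$ by exhibiting it as a homogeneous space $\Ac^\times / P$ for the stabilizer $P$ of the base point, and to produce an explicit chart near the base point out of the Gau\ss\ decomposition recorded in Remark~\ref{Gauss}. First I would fix a representative $\delta$: $0=p_0<p_1<\cdots<p_n=\1$ of $[\delta]$ (after passing to the algebra $M_N(\Ac)$ if necessary so that such a chain of honest idempotents exists — this is legitimate since $M_N(\Ac)$ is again a CIA by Corollary~1.2 in \cite{Sw77}, and the flag manifold only depends on the Morita-type data). The stabilizer of the flag $(p_0\cA,\dots,p_n\cA)$ under $\beta^{(n)}$ is exactly the group $\Delta(\delta)^\times$ of invertible upper-triangular elements, so as a set $\Fl_{\Ac}([\delta]) = \Ac^\times / \Delta(\delta)^\times$. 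The candidate chart is the orbit map restricted to $N(\1-\delta)$, the "strictly lower triangular" group: the composite $N(\1-\delta)\to\Ac^\times\to\Fl_{\Ac}([\delta])$, $x\mapsto\beta^{(n)}(x,[\delta])$, is the local parametrization, with $N(\1-\delta)$ carrying the locally convex structure of the closed complemented subspace $\Ran(\id-\Phi_\delta)\cap(\text{lower part})$ of $\Ac$.

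\textbf{Key steps, in order.} (1) Show the chart map is injective on $N(\1-\delta)$: if $x,x'\in N(\1-\delta)$ send $[\delta]$ to the same flag, then $x^{-1}x'\in\Delta(\delta)^\times\cap N(\1-\delta)=\{\1\}$ by uniqueness of the Gau\ss\ decomposition \eqref{gauss}. (2) Identify the image: $x\cdot[\delta]$ lies in the image of the chart precisely when $x$ can be replaced (modulo right multiplication by $\Delta(\delta)^\times$) by an element of $N(\1-\delta)$, i.e.\ precisely on the open set $\beta^{(n)}(\Omega_\delta\Delta(\delta)^\times,[\delta])$; the Gau\ss\ map $\sigma\colon\Omega_\delta\to\Ac$ of Remark~\ref{Gauss} gives the inverse chart, real analytic by construction. (3) Cover the whole orbit by translates: for $g\in\Ac^\times$, conjugating the chain $\delta$ by $g$ (equivalently, applying $\beta^{(n)}(g,\cdot)$) moves the base point, and the translated charts $g\cdot(\text{chart at }[\delta])$ cover $\Fl_{\Ac}([\delta])$ since the action is transitive; the chart-overlap maps are compositions of $\sigma$, left translations, and the (continuous) inversion, hence smooth — this is where the CIA hypothesis and the real-analyticity of $\sigma$ are used. (4) With the atlas in hand, smoothness of $\beta^{(n)}\colon\Ac^\times\times\Fl_{\Ac}([\delta])\to\Fl_{\Ac}([\delta])$ is read off in local coordinates as a composition of the multiplication in $\Ac$ and $\sigma$; the orbit map $g\mapsto\beta^{(n)}(g,[\delta])$ is smooth for the same reason and open because on the chart neighborhood it is, up to the diffeomorphism onto $N(\1-\delta)$, the projection $\Ac^\times\cong N(\1-\delta)\times D(\delta)^\times\times N(\delta)\to N(\1-\delta)$ coming from the Gau\ss\ factorization. (5) Finally, continuity of $\Fl_{\Ac}(n)\hookrightarrow\Gr(\Ac)^n$: each coordinate $[(p_1,\dots,p_n)]\mapsto p_j\cA$ is, in the chart, $x\mapsto xp_j\cA$, which factors through the already-known smooth orbit map onto the Grassmannian $\Gr(\Ac)$ (Theorem~5.3 in \cite{BN05}), hence is continuous.

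\textbf{Main obstacle.} The delicate point is Step (3): checking that the transition maps between translated charts are smooth rather than merely continuous, and in particular that the locally convex model space $N(\1-\delta)$ (equivalently the "off-diagonal" complement of $\Delta(\delta)$ in $\Ac$) is genuinely complemented and that $\Phi_\delta$, $\id-\Phi_\delta$ are continuous projections giving a topological direct sum decomposition $\Ac = D(\delta)\oplus(\text{strictly upper})\oplus(\text{strictly lower})$ — this uses that $p_1,\dots,p_{n-1}$ are idempotents and left/right multiplication by them is continuous, so the decomposition is automatic, but one must carry it through all $n-1$ levels of the inductive matrix picture of Remark~\ref{Gauss}. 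The genuinely new content is that this can be done purely from the Gau\ss\ decomposition, with no appeal to an inverse function theorem; once the real-analytic local section $\sigma$ is available on $\Omega_\delta$, everything else is bookkeeping with $\Ac$-linear algebra and the continuity of inversion. The abstract framework for turning such a family of sections into a manifold atlas is precisely the general lemma deferred to Appendix~\ref{A}, which I would invoke to streamline Steps (1)--(4).
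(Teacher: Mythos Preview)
Your approach is essentially the paper's: identify $\Fl_{\Ac}([\delta])$ with $\Ac^\times/\Delta(\delta)^\times$, use the Gau\ss\ decomposition of Remark~\ref{Gauss} to see that $N(\1-\delta)\times\Delta(\delta)^\times\to\Omega_\delta$ is a homeomorphism with smooth projection $\sigma$, and then invoke Lemma~\ref{quot} to produce the manifold structure with all the stated properties. The parenthetical about passing to $M_N(\Ac)$ is unnecessary and slightly confused---the flag $[\delta]$ already comes equipped with a chain of idempotents in $\Ac$ by definition---but it does no harm and the rest of your outline matches the paper's proof.
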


\begin{proof}
By definition, the flag manifold $\Fl_{\Ac}([\delta])$ 
is transitively acted on by the group $\Ac^\times$, 
and the stabilizer of $[\delta] = (p_1 \cA, \ldots, p_n\cA)$ 
is 
$$ B := \{ g \in \cA^\times \mid g,g^{-1} \in \Delta(\delta) \} 
= \Delta(\delta)^\times, $$
which is a closed subgroup of $\Ac^\times$. 
We thus obtain a bijection 
$$\Fl_{\Ac}([\delta])\simeq\Ac^\times/\Delta(\delta)^\times.$$
Moreover, it is clear that $\Delta(\delta)^\times=D(\delta)^\times N(\delta)$
and it easily follows by the way the Gau\ss\ decomposition 
was constructed in Remark~\ref{Gauss} above
that the multiplication mapping 
$N(\1-\delta)\times\Delta(\delta)^\times\to\Omega_\delta$ 
is a homeomorphism. 
Since $\Ac^\times$ is a locally convex Lie group (see \cite{Gl02}), 
it then follows that a natural smooth structure 
on the flag manifold $\Fl_{\Ac}([\delta])$ can be constructed by using 
Lemma~\ref{quot} in the~Appendix~\ref{A}. 

Note that Lemma~\ref{quot} also implies that 
this smooth structure depends only on 
the point $[\delta]\in\Fl_{\Ac}(n)$ and not on the choice 
of $(p_1,\dots,p_n)\in[\delta]$, since for any other 
$\delta'=(p_1',\dots,p_n')\in[\delta]$, the subgroup 
$N(\1 - \delta')$ leads to the same smooth structure. 

Finally, we recall that the set $\Pc(\Ac)$ is endowed with the topology induced from $\Ac$, 
and then the Grassmannian $\Gr(\Ac)=\Pc(\Ac)/\sim$ 
is endowed with the corresponding quotient topology. 
By using the special case $n = 1$ of the construction above, 
one can see that 
the quotient mapping $\Pc(\Ac)\to\Gr(\Ac)$ is open. 
Now it is easy to see that the natural mapping \eqref{flags_vs_gr} 
from $\Fl_\cA([\delta]) \to \Gr(\cA)^n$ 
is continuous with respect to the above described manifold structure on 
$\Fl_{\Ac}([\delta])$ and the product topology on $\Gr(\Ac)^n$. 
\end{proof}

\subsection*{Unitary groups acting on the flag manifolds}

The second assertion in the following lemma was recorded 
with the same idea of proof 
in Lemma~1.3 in \cite{MS97} for $C^*$-algebras. 
We include here the full details of the proof in order to show 
that the corresponding statement is actually purely algebraic. 

\begin{lemma}\label{mlrsms}
If $\Ac$ is a complex associative unital algebra, then the following assertions hold.
\begin{enumerate}
\item\label{mlr}
If $p,q\in\Pc(\Ac)$ and $p\sim q$, 
then $s:=pq+(\1-p)(\1-q)$ satisfies $s\in\Ac^\times$ and 
$sqs^{-1}=p$. 
\item\label{sms}
If $\Ac$ is additionally endowed with an involution 
that makes it into a hermitian algebra, then for every $e\in\Pc(\Ac)$,  
there exists a unique $p\in\Pc(\Ac)$ such that $p=p^*$ and $p\sim e$, 
namely $p=e(\1-(e^*-e))^{-1}$. 
\end{enumerate}
\end{lemma}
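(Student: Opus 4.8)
The plan is to prove both parts by direct computation from the defining relations; no topology is needed, and only part~(2) uses the involution and hermiticity.

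For part~(1), unpack $p\sim q$ as the four identities $p^2=p$, $q^2=q$, $pq=q$, $qp=p$. Using $pq=q$ one rewrites $s=pq+(\1-p)(\1-q)=\1-p+q$, and I would then check that $s':=\1-q+p$ (which is just the ``symmetric'' element $qp+(\1-q)(\1-p)$) satisfies $ss'=s's=\1$, by expanding and repeatedly applying the four relations; since the identity is invariant under $p\leftrightarrow q$ (which swaps $s\leftrightarrow s'$) it suffices to verify one of the two products. Finally $sq=q$ is immediate, and $qs^{-1}=q(\1-q+p)=p$, so $sqs^{-1}=(sq)s^{-1}=qs^{-1}=p$. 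Nothing here is delicate; the only point to respect is that the argument stays purely algebraic, as the statement asserts.

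For part~(2), set $u:=\1-(e^*-e)=\1+e-e^*$. The one genuinely substantial step is that $u\in\Ac^\times$: the element $e^*-e$ is skew-adjoint, so $c:=-\ie(e^*-e)$ is self-adjoint, whence $\sigma_{\Ac}(c)\subseteq\R$ by hermiticity and therefore $\sigma_{\Ac}(e^*-e)=\ie\,\sigma_{\Ac}(c)\subseteq\ie\R$; since $1\notin\ie\R$, the element $u$ is invertible. Now put $p:=eu^{-1}$ and establish, in this order: (i) the one-line identity $e^*u=e^*e=u^*e$, which gives $p^*=(u^*)^{-1}e^*=eu^{-1}=p$; (ii) $ep=e^2u^{-1}=eu^{-1}=p$; (iii) hence $p=p^*=(ep)^*=pe^*$; (iv) $pe=e$, because $pe-e=eu^{-1}(e-u)=eu^{-1}(e^*-\1)=pe^*-p=0$ by~(iii); (v) $p^2=(eu^{-1}e)u^{-1}=(pe)u^{-1}=eu^{-1}=p$, using~(iv). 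Thus $p\in\Pc(\Ac)$ is self-adjoint and, by (ii) and (iv), $p\sim e$; this shows the asserted formula works.

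For uniqueness, let $p,p'$ be self-adjoint idempotents with $p\sim e$ and $p'\sim e$. Then $p\sim p'$ by transitivity of $\sim$, i.e.\ $pp'=p'$ and $p'p=p$; taking adjoints in $p'p=p$ and using self-adjointness gives $pp'=p$, hence $p=p'$. I expect the invertibility of $u$ to be the only step requiring an idea --- it is precisely where hermiticity is used --- while the remaining manipulations are bookkeeping with $e^2=e$ and $(e^*)^2=e^*$; the one pitfall is to order the claims (i)--(v) so that $p^2=p$ is deduced \emph{after} $pe=e$, not before, to avoid circularity.
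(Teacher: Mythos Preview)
Your proof is correct and follows essentially the same route as the paper: in part~(1) both you and the paper simplify $s$ to $\1+q-p$ and exhibit $\1+p-q$ as its inverse, and in part~(2) existence both proofs set $p=e(\1-(e^*-e))^{-1}$, use hermiticity to justify invertibility, and verify $p^*=p$, $ep=p$, $pe=e$, $p^2=p$ in that order (your identity $e^*u=e^*e=u^*e$ is just a compact packaging of the paper's computation $e^*(\1-(e^*-e))=e^*e=(\1+(e^*-e))e$). The one genuine difference is uniqueness: the paper shows directly that any self-adjoint $q\sim e$ satisfies $q(\1-(e^*-e))=e$, forcing $q=p$ by the explicit formula, whereas you argue more abstractly via transitivity of $\sim$ and taking adjoints; both are one-line arguments, but yours has the slight advantage of not re-deriving the formula, while the paper's makes the uniqueness of the formula itself manifest.
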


\begin{proof}
\eqref{mlr} 
To see that $s$ is invertible, recall that $p\sim q$ means 
$pq=q$ and $qp=p$, which easily yields $(q-p)^2=0$ and $s=\1+(q-p)$, 
and then $s\in\Ac^\times$ with $s^{-1}=\1-(q-p)$. 
The relation $sqs^{-1}=p$ follows immediately from 
$sq = pq = ps$. 

\eqref{sms}
Firstly note that the element $p\in\Ac$ in the statement is well defined 
since $(e^*-e)^*=-(e^*-e)$, hence the hypothesis that 
$\Ac$ is a hermitian algebra ensures that 
the number $1\in{\mathbb C}$ does not belong to the spectrum of 
$e^*-e\in\Ac$. 
Next, in order to prove the existence assertion, 
we have to check that that element $p\in\Ac$ has 
the required properties 
$$ep=p,\ pe=e, \text{ and }p=p^*=p^2.$$  
The first of these equations follows at once since $e^2=e$. 
The latter equality also implies $(e^*)^2=e^*$, whence 
$e^*(\1-(e^*-e))= e^*e=(\1+(e^*-e))e$. 
Then $(\1+(e^*-e))^{-1}e^*=e(\1-(e^*-e))^{-1}$, 
that is, $p^*=p$. 
Moreover, since $p(\1-(e^*-e))=e$, we get $pe-e=pe^*-p$, 
hence $pe-e=(ep^*-p^*)^*=0$, since we have just seen that $p^*=p$ 
and $ep=p$. 
Thus $pe=e$, and then $pe(\1-(e^*-e))^{-1}=e(\1-(e^*-e))^{-1}$, 
that is, $p^2=p$. 
Consequently $p=p^*\in\Pc(\Ac)$ and $p\sim e$. 

For the uniqueness assertion, assume that $q=q^*\in\Pc(\Ac)$ 
and $q\sim e$. 
In particular $eq=q$, whence $q^*e^*=q^*$, so $qe^*=q$. 
Since also $qe=e$, by subtracting these equalities from each other 
we get $q(e^*-e)=q-e$. 
Thence $q(\1-(e^*-e))=e$, and then 
$q=e(\1-(e^*-e))^{-1}=p$. 
This completes the proof. 
\end{proof}

\begin{theorem}\label{trans}
If $\Ac$ is a Mackey-complete, hermitian CIA, then the corresponding flag manifolds
are transitively acted on by the unitary group of~$\Ac$.
\end{theorem}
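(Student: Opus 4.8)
The plan is to obtain the statement as a direct consequence of the polar-type factorization established in Corollary~\ref{cia-nest2}. Fix $[\delta]=[(p_1,\dots,p_n)]\in\Fl_{\Ac}(n)$. By definition $\Fl_{\Ac}([\delta])$ is the $\Ac^\times$-orbit of $[\delta]$, and since $\U(\Ac)\subseteq\Ac^\times$ this orbit is stable under $\U(\Ac)$; thus the assertion to be proved is that the $\U(\Ac)$-orbit of $[\delta]$ already exhausts the whole $\Ac^\times$-orbit, i.e.\ that every point $g\cdot[\delta]$ with $g\in\Ac^\times$ is of the form $u\cdot[\delta]$ for some $u\in\U(\Ac)$.

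First I would reduce to the case in which the idempotents $p_1,\dots,p_n$ are self-adjoint. By the second assertion of Lemma~\ref{mlrsms} one may replace each $p_j$ by the unique self-adjoint idempotent lying in the class $[p_j]$; since $p_j\le p_{j+1}$ means $p_j\Ac\subseteq p_{j+1}\Ac$, a condition that depends only on the classes $[p_j]$, the new idempotents remain totally ordered, and the point $[\delta]$ of $\Fl_{\Ac}(n)$ is unchanged. Adjoining $0$ and $\1$ as smallest and largest terms when they are not already present only inserts entries that are fixed by every element of $\Ac^\times$, so it changes neither the orbit nor the stabilizer of the flag; we thus obtain a finite, totally ordered family $\delta$: $0=p_0<p_1<\cdots<p_n=\1$ of self-adjoint elements of $\Pc(\Ac)$ to which Corollary~\ref{cia-nest2} applies, with stabilizer $\Delta(\delta)^\times$ by the proof of Theorem~\ref{manifolds}.

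Now let $g\in\Ac^\times$ be arbitrary. By Corollary~\ref{cia-nest2} we may write $g=uab$ with $u\in\U(\Ac)$, $a\in D(\delta)_{+}^\times$ and $b\in\Delta(\delta)^\times$ such that $\Phi_\delta(b)=\1$. Every $x\in D(\delta)$ satisfies $xp_k=p_kx$ and hence $xp_k=p_kxp_k$, so $D(\delta)\subseteq\Delta(\delta)$; since $a$ is invertible in $D(\delta)$, this gives $a\in\Delta(\delta)^\times$ and therefore $ab\in\Delta(\delta)^\times$. Consequently $u^{-1}g=ab$ lies in the stabilizer of $[\delta]$, whence $u\cdot[\delta]=g\cdot[\delta]$. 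As $g$ was arbitrary, the $\U(\Ac)$-action on $\Fl_{\Ac}([\delta])$ is transitive.

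I expect no serious obstacle here: all the analytic content is already carried by Corollary~\ref{cia-nest2} (and, behind it, by Proposition~\ref{cia-nest1} together with the Shirali-Ford type theorem of \cite{Bi04}, which is where Mackey-completeness enters), and the only genuinely new work is the routine reduction to self-adjoint representatives and the elementary inclusion $D(\delta)^\times\subseteq\Delta(\delta)^\times$. The one point requiring a little care is reconciling the normalization $p_0=0$, $p_n=\1$ imposed in the factorization results with the more flexible notion of an $n$-flag in Definition~\ref{flags}.
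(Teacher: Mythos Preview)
Your proof is correct and follows essentially the same approach as the paper: reduce to self-adjoint idempotents via Lemma~\ref{mlrsms}\eqref{sms}, enlarge the chain to start at $0$ and end at $\1$, apply Corollary~\ref{cia-nest2} to write $g=uab$, and observe that $ab$ lies in the stabilizer $\Delta(\delta)^\times$ of the flag. Your write-up is in fact somewhat more explicit than the paper's about why the self-adjoint representatives remain totally ordered and why $D(\delta)^\times\subseteq\Delta(\delta)^\times$, and you correctly flag the minor normalization issue with $p_0=0$, $p_n=\1$.
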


\begin{proof}
Let $(p_1,\dots,p_n)\in\Pc_n(\Ac)$. 
We shall prove that the unitary group of $\Ac$ acts transitively on the flag manifold 
$\Fl_{\Ac}([(p_1,\dots,p_n)])$. 
According to Lemma~\ref{mlrsms}\eqref{sms}, we may assume that 
$p_j=p_j^*$ for $j=1,\dots,n$. 
Then $\delta$: $0=p_0<p_1<\cdots< p_n<p_{n+1}=\1$ will be a finite, totally ordered family 
of self-adjoint elements in $\Pc_{\Ac}$. 
Now let $g\in\Ac^\times$ be arbitrary. 
We have to prove that there exists a 
$u\in\U(\Ac)$ such that for $j=1,\dots,n$ we have 
$gp_j\Ac=up_j\Ac$. 
%$$gp_jg^{-1}up_ju^{-1}=up_ju^{-1} \text{ and }up_ju^{-1}gp_jg^{-1}=gp_jg^{-1}. $$
For this purpose we may use the element $u\in\U(\Ac)$ provided by 
Corollary~\ref{cia-nest2}, since the factors in the corresponding decomposition $g=uab$ 
satisfy $ab p_j\cA=p_j\cA$, hence 
$gp_j\Ac=up_j\Ac$, and this completes the proof. 
\end{proof}

\begin{remark}\label{trans_BR07}
The above Theorem~\ref{trans} is a wide extension of  Proposition~2.7 in \cite{BR07}, 
whose method of proof  is specific for finite $W^*$-algebras. 
\end{remark}

\begin{corollary}\label{char}
Let $\Ac$ be a Mackey-complete CIA. 
Then $\Ac$ is hermitian if and only if the matrix algebra $M_2(\Ac)$ 
has the property that each of the corresponding flag manifolds is 
transitively acted on by the unitary group $\U_2(\Ac)$. 
\end{corollary}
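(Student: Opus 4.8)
The plan is to extract both implications from results already established in the excerpt, with the matrix algebra $M_2(\Ac)$ playing the role of the ambient algebra $\Bc$ in the flag-manifold machinery. First I would record that if $\Ac$ is a Mackey-complete CIA endowed with a continuous involution, then so is $M_2(\Ac)$: the CIA property is Corollary~1.2 in \cite{Sw77} (already cited in the proof of Proposition~\ref{wichmann}), the continuity of the entrywise involution is immediate, and Mackey-completeness passes to finite products hence to $M_2(\Ac)$. With this in hand the two directions go as follows.

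For the forward implication, suppose $\Ac$ is hermitian. By Proposition~\ref{wichmann} (the equivalence \eqref{n=1}$\iff$\eqref{n} with $n=2$), $M_2(\Ac)$ is a hermitian algebra, and it is a Mackey-complete CIA by the previous paragraph. Theorem~\ref{trans} then applies verbatim to $M_2(\Ac)$ in place of $\Ac$: each flag manifold of $M_2(\Ac)$ is transitively acted on by $\U(M_2(\Ac)) = \U_2(\Ac)$, which is exactly the asserted property.

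For the converse, suppose every flag manifold of $M_2(\Ac)$ is transitively acted on by $\U_2(\Ac)$; I want to conclude that $M_2(\Ac)$ — equivalently, by Proposition~\ref{wichmann}, $\Ac$ itself — is hermitian. Here I would use Remark~\ref{squares}: it suffices to show that $-1 \notin \sigma_{\Ac}(a^2)$ for every self-adjoint $a \in \Ac$. Given such an $a$, form inside $M_2(\Ac)$ the self-adjoint idempotent $p = p_1 = \begin{pmatrix} \1 & 0 \\ 0 & 0\end{pmatrix}$ of Example~\ref{matrices} and the element $g = \begin{pmatrix} \1 & 0 \\ a & \1 \end{pmatrix} \in M_2(\Ac)^\times$ (its inverse is $\begin{pmatrix} \1 & 0 \\ -a & \1\end{pmatrix}$). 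Apply the transitivity hypothesis to the one-flag manifold through $[p]$: there is $u \in \U_2(\Ac)$ with $g p (M_2(\Ac)) = u p (M_2(\Ac))$, i.e. $u^{-1}g$ stabilizes the right ideal $p M_2(\Ac)$, which by Definition~\ref{nests} means $u^{-1}g \in \Delta(\delta)$ for the two-term flag $0 < p < \1$; writing $v = u^{-1}g$ and reading off the lower-left entry of the $2\times 2$ upper-triangularity condition together with the relation $v^*v = g^*g$ (since $u$ is unitary) produces, after expanding $g^*g = \begin{pmatrix} \1 + a^2 & a \\ a & \1\end{pmatrix}$, exactly the statement that $\1 + a^2$ is invertible in $\Ac$, i.e. $-1 \notin \sigma_{\Ac}(a^2)$. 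By Remark~\ref{squares}, $\Ac$ is hermitian.

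The main obstacle is the bookkeeping in the converse direction: one must be careful that ``transitivity of $\U_2(\Ac)$ on \emph{every} flag manifold'' is genuinely what gets used, and that the algebraic manipulation with the $2\times 2$ block $g$ really isolates invertibility of $\1 + a^2$ rather than some weaker one-sided statement — this is where the unitarity of $u$ (so that $v^*v = g^*g$ exactly, not merely up to a similarity) and the precise form of $\Delta(\delta)$ for the flag $0<p_1<\1$ both have to be invoked together. An alternative, cleaner route for the converse is to observe that transitivity of $\U_2(\Ac)$ on the Grassmannian of $M_2(\Ac)$ forces: for every self-adjoint idempotent-conjugate $e = g p g^{-1}$ there is a self-adjoint representative in $[e]$ \emph{and} the unitary orbit of $p$ already exhausts $[e]$; combined with Lemma~\ref{mlrsms}\eqref{sms} (whose proof only needs the hermitian property, which is what we are trying to prove) this is essentially a fixed-point/polar-decomposition statement whose solvability is equivalent, via Corollary~\ref{cia-red2} applied in reverse, to $\sigma_{\Ac}(a^*a) \subseteq [0,\infty)$ for all $a$, hence to the Shirali--Ford property, hence by Remark~\ref{squares} to hermiticity. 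I would present the first, concrete $2\times 2$ argument as the main proof and mention the second as a remark.
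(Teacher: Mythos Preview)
Your proof is correct and follows essentially the same approach as the paper's. The forward direction is identical (Proposition~\ref{wichmann} plus Theorem~\ref{trans}), and for the converse both you and the paper use the same matrix $g=\begin{pmatrix}\1&0\\a&\1\end{pmatrix}$ and the same key computation $(g^*g)_{11}=\1+a^2$, the only cosmetic difference being that the paper argues by contradiction while you argue directly. (Your closing ``alternative route'' via Lemma~\ref{mlrsms}\eqref{sms} and a reversed Corollary~\ref{cia-red2} is too vague to stand on its own---both of those results presuppose hermiticity---so it is wise that you present the explicit $2\times2$ argument as the actual proof.)
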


\begin{proof}
If $\Ac$ is hermitian, then the Mackey-complete CIA $M_2(\Ac)$ is hermitian 
by Proposition~\ref{wichmann}, 
hence Theorem~\ref{trans} shows that the unitary group $\U_2(\Ac)$ of $M_2(\Ac)$ 
acts transitively on every flag manifold of $M_2(\Ac)$. 

Conversely, if that transitivity condition is satisfied, 
then $\Ac$ has to be a hermitian algebra. 
Indeed, if $\cA$ is not hermitian, then Remark~\ref{squares} shows that there exists a 
hermitian element $a  = a^* \in \cA$ with $\ie \in \sigma(a)$, 
so that $a^2 + \1$ is not invertible. 
Now let $\delta$: $0=p_0<p_1<p_2=\1$ in $M_2(\Ac)$, 
where 
$$p_1 = \begin{pmatrix}
\1 & 0 \\ 
0 & 0 \end{pmatrix}$$ 
and note that the matrix 
$$ g =\begin{pmatrix}
\1 & 0 \\ 
a & \1 \end{pmatrix} \in \GL_2(\cA) $$ 
is not contained in the subset 
$U_2(\cA) \Delta(\delta)^\times$ 
because the entry $(g^* g)_{11} = \1 + a^2$ is not invertible. 
It thus follows that the element $\beta^{(2)}(g,[\delta])\in\Fl_{\Ac}([\delta])$ 
is different from $\beta^{(2)}(u,[\delta])$ for every $u\in\U_2(\Ac)$, 
and this contradicts the assumption that $\U_2(\Ac)$ acts transitively 
on every flag manifold of $M_2(\Ac)$. 
\end{proof}

\begin{rem}
\normalfont
We also note that the matrix 
$$ g =\begin{pmatrix}
a + \ie & a \\ 
a & a- \ie \end{pmatrix} \in \GL_2(\cA) $$ 
satisfies 
$g J g^* J = \1$ for 
$$ J := \begin{pmatrix}\1 & \hfill 0 \\ 0 & -\1 \end{pmatrix}, $$
so that 
$$ g \in \U_{1,1}(\cA) := \{ g \in \GL_2(A) \mid g^{-1} = Jg^* J\}. $$
Since $(a + \ie)(a- \ie) = a^2 + \1$ implies that 
the entry $g_{11} = a + i$ is not invertible, it 
follows that 
$g$ is not contained in the open subset 
$$ N(\1 - \delta) \Delta(\delta)^\times.$$ 
If, conversely, $(\cA,*)$ is hermitian, then 
$$g = \begin{pmatrix}
a & b \\ c & d \end{pmatrix} \in \U_{1,1}(\cA) $$ 
implies that $aa^* = \1 + bb^*$ is invertible, so that 
$a \in \cA^\times$, which implies that 
$$ \U_{1,1}(\cA) \subeq  N(\1 - \delta) \Delta(\delta)^\times$$ 
(cf.\ \cite{Bi04}). 
\end{rem}

%\subsection*{Further problems}
%
%\begin{problem}\label{converse}
%Prove or disprove the following converse to Corollary~\ref{cia-nest2}: 
%Let $\Ac$ be a Mackey-complete algebra with continuous inversion  
%such that for every finite totally ordered family 
%$\delta$ in $\Pc_{\Ac}$ and for every  
%$s\in\Ac^\times$ there exist uniquely determined elements 
%$u\in\U({\Ac})$ $a\in D(\delta)_{+}^\times$ and 
%$b\in N(\delta)$ 
%with $\Phi_{\delta}(b)=\1$ and $s=uab$. 
%Then $\Ac$ is a hermitian algebra. 

%Note that, according to Corollary~\ref{char}, this assertion holds true 
%in the special case when the involutive topological algebras $\Ac$ and $M_2(\Ac)$ 
%are isomorphic. 
%\end{problem}

%\begin{problem}\label{root}
%Investigate the consequences of Corollary~\ref{cia-nest2} 
%for topics related to flag manifolds associated with $\Ac$,  
%geometric representation theory of $\Ac^\times$ 
%(in particular representations of loop groups),  
%or to group decompositions and representation theory of 
%root-graded Lie groups (\cite{Ne03}, \cite{Mu07}). 
%\end{problem}

\appendix

\section{A lemma on manifold structures on homogeneous spaces}\label{A}

The following statement is a more precise version ---in the present setting--- 
for Cor.~5.3 in \cite{DG00}.  
It equally applies to $\Cc^\infty$ and to real analytic manifolds 
modelled on locally convex spaces. 

\begin{lemma}\label{quot}
Let $G$ be a locally convex Lie group and 
assume that $B$ is a closed subgroup of $G$ 
for which there exists a subset $N\subseteq G$ 
such that the following conditions are satisfied: 
\begin{enumerate}
\item $\1\in N$; 
\item $N$ is homeomorphic to an open set in some locally convex space, 
and that homeomorphism makes $N$ into a manifold such that 
the inclusion $N\hookrightarrow G$ is a smooth mapping; 
\item there exists an open neighborhood $\Omega$ of $\1\in G$ 
such that the multiplication mapping 
$$N\times B\to \Omega, \quad (n,b)\to nb$$
is a homeomorphism 
and the corresponding projection mapping 
$\sigma\colon \Omega \to N, nb \mapsto n$ is smooth. 
\end{enumerate}
Then the homogeneous space $G/B$ endowed with the quotient topology 
has a structure of manifold (with the same model space as $N$) 
such that the natural projection $\pi\colon G\to G/B$ 
is smooth and open, and the natural action 
\begin{equation}\label{A1}\tag{A1}
G\times G/B\to G/B,\quad (g,nB)\mapsto L_g(nB):=gnB 
\end{equation}
is smoooth. 

Moreover, if $N' \subeq G$ is another subset satisfying {\rm(1)--(3)}, 
then it defines on $G/B$ the same smooth structure. 
\end{lemma}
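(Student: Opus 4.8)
The plan is to build the manifold structure on $G/B$ by transporting the chart on $N$ around via the group action, following the standard recipe for homogeneous spaces but keeping careful track of the smoothness hypotheses. First I would use condition (3) to observe that the restriction of the projection $\pi\colon G\to G/B$ to $N$ is injective: if $n_1 B = n_2 B$ with $n_1,n_2\in N$, then $n_1 = n_2 b$ for some $b\in B$, and since $n_1 = n_1\cdot\1$ lies in $\Omega$ with its unique factorization, uniqueness of the $N\times B\to\Omega$ decomposition forces $n_1=n_2$. Moreover $\pi(N) = \pi(\Omega)$ is open in $G/B$: for $g\in\Omega$ one has $\pi(g) = \pi(\sigma(g))$, and $\pi^{-1}(\pi(\Omega))$ is the saturation $\Omega B = NB\cdot B = NB$, which is open since multiplication $\Omega\times B\to G$ has open image (it equals $\bigcup_{b} \Omega b$). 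So $\pi|_N\colon N\to \pi(N)$ is a continuous open bijection, hence a homeomorphism, and I would declare it a chart, call it $\varphi_0 := (\pi|_N)^{-1}\colon \pi(N)\to N$. Then I would define charts around an arbitrary point $gB$ by left translation: on the open set $g\cdot\pi(N) = \pi(gN)$ set $\varphi_{g} := \varphi_0\circ L_g^{-1} = \varphi_0\circ L_{g^{-1}}$, where $L_g$ denotes the map from \eqref{A1}.

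The core computation is the compatibility of two such charts $\varphi_{g}$ and $\varphi_{h}$ on the overlap of their domains. A point there has the form $\varphi_h(x) = n\in N$ with $hnB$ in the domain of $\varphi_g$, i.e.\ $g^{-1}hn B = n'B$ for some $n'\in N$. Then $\varphi_g\circ\varphi_h^{-1}(n) = \sigma\bigl(g^{-1}h n\bigr)$ whenever $g^{-1}hn$ happens to lie in $\Omega$; in general one covers the overlap by smaller opens on which $g^{-1}hn$ can be written as $(g^{-1}hn)b_0$ times an adjustment landing in $\Omega$, but the clean way is: the transition map sends $n\mapsto \sigma(g^{-1}h n)$ on the set where $g^{-1}h n\in\Omega$, and this is smooth because $n\mapsto g^{-1}hn$ is smooth $N\to G$ (the inclusion $N\hookrightarrow G$ is smooth by (2) and left translation in $G$ is smooth) and $\sigma\colon\Omega\to N$ is smooth by (3). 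To handle the full overlap, note that if $g^{-1}hn B = n' B$ then $g^{-1}h n = n' b$ for a unique $b\in B$, and replacing $h$ by $h b'$ for suitable $b'\in B$ (which does not change the chart $\varphi_h$, since $B$-translates of $N$-representatives give the same coset) one can arrange $g^{-1}hn\in\Omega$; so the transition map is locally of the stated smooth form. This is the step I expect to be the main obstacle: organizing the bookkeeping so that "$\sigma$ is only defined on $\Omega$" does not obstruct smoothness on the whole overlap, and verifying that the chart $\varphi_h$ genuinely depends only on the coset-representative data and not on an incidental choice. Once transitions are smooth, $G/B$ is a manifold modelled on the model space of $N$.

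With the atlas in hand, the remaining assertions are routine. Smoothness of $\pi$: near $g\in G$, writing $\Omega_g := gN B$... more simply, $\varphi_{g}\circ\pi$ on a suitable neighborhood of $g$ equals $\sigma\circ L_{g^{-1}}$ composed with the inclusion of that neighborhood, hence is smooth, using that $G$ is a Lie group and $\sigma$ is smooth; and $\pi$ is open since $\pi|_N$ is a homeomorphism onto an open set and $G$ is covered by the opens $gNB$. Smoothness of the action \eqref{A1}: in the charts $\varphi_h$ on the target and a product of a $G$-chart with $\varphi_k$ on the source, the action reads $(g, n)\mapsto \sigma(h^{-1} g k n)$ on the locus where $h^{-1}gkn\in\Omega$, which is smooth as a composition of the multiplication of $G$, the inclusion $N\hookrightarrow G$, and $\sigma$. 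Finally, for the independence statement, let $N'$ satisfy (1)--(3) with data $(\Omega', \sigma')$. It suffices to show the identity map $G/B\to G/B$ is a diffeomorphism between the two structures, and by the translation construction and transitivity it is enough to check smoothness near the base point $\1 B$ in both directions. Near $\1 B$ the transition is $n'\mapsto \sigma(n')$ from $N'$ to $N$ (defined where $n'\in\Omega$) and $n\mapsto\sigma'(n)$ the other way; both are smooth by the same argument as for the transition maps above, using that $N\hookrightarrow G$ and $N'\hookrightarrow G$ are smooth and that $\sigma,\sigma'$ are smooth. Hence the two smooth structures coincide, completing the proof. $\qed$
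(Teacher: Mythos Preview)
Your proof is correct and follows essentially the same route as the paper: translate the local chart $\pi|_N$ by the $G$-action, compute the transitions as $n\mapsto\sigma(g^{-1}hn)$, and verify smoothness of $\pi$, of the action, and independence of $N$ via $\sigma,\sigma'$.

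One point to tighten: your concern about ``handling the full overlap'' is unnecessary, and the remedy you propose is wrong. As you yourself write, if $n$ lies in the overlap then $g^{-1}hn=n'b\in NB$, and since the multiplication $N\times B\to\Omega$ is a bijection onto $\Omega$, this already says $g^{-1}hn\in\Omega$; hence the overlap in $N$-coordinates is exactly $\{n\in N: g^{-1}hn\in\Omega\}=(h^{-1}g\Omega)\cap N$, and the transition is globally $n\mapsto\sigma(g^{-1}hn)$ there, just as the paper records. Your proposed fix of ``replacing $h$ by $hb'$'' is not only unneeded but incorrect: $\varphi_{hb'}=\varphi_0\circ L_{(hb')^{-1}}\neq\varphi_0\circ L_{h^{-1}}=\varphi_h$, so the chart genuinely depends on $h$, not just on $hB$.
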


\begin{proof}
Let us define 
$$
\psi_{\1} :=\pi|_N\colon N\to G/B,\quad\text{and}\quad 
V_{\1} :=\psi_{\1}(N)\subseteq G/B.
$$
Note that $\psi_{\1}$ is injective 
for, if $\psi_{\1}(n_1)=\psi_{\1}(n_2)$, 
then $n_1\in n_2B$, hence $n_1=n_2$ 
because of the hypothesis that the multiplication mapping 
$N\times B\to\Omega$ bijective. 
Next, for $g\in G$, define 
$$\psi_g :=L_g\circ\pi|_N\colon N\to G/B,\quad\text{and}\quad 
V_g :=\psi_g(N)\subseteq G/B.$$
It is clear that $G/N=\bigcup\limits_{g\in G}V_g$ so, 
in order for the family of bijections
$\{\psi_g\colon N\to V_g\}_{g\in G}$ to define a structure of smooth manifold on $G/B$, 
we still have to prove that for every $g\in G$ 
the set $\psi_g^{-1}(V_g\cap V_{\1})$ is open in $N$ 
and the coordinate change 
$\psi_{\1}^{-1}\circ\psi_g
|_{\psi_g^{-1}(V_g\cap V_{\1})}
\colon\psi_g^{-1}(V_g\cap V_{\1})\to
\psi_{\1}^{-1}(V_g\cap V_{\1})$ is smooth. 
In fact, for $x\in\psi_g^{-1}(V_g\cap V_{\1})\subseteq N$ and 
$x'\in\psi_{\1}^{-1}(V_g\cap V_{\1})\subseteq N$ we have 
$$(\psi_{\1}^{-1}\circ\psi_g)(x)=x'
\iff \psi_g(x)=\psi_{\1}(x')
\iff gxB=x'B
\iff gx\in x'B 
\iff gx\in\Omega \text{ and }x'=\sigma(gx)$$
and therefore $\psi_g^{-1}(V_g\cap V_{\1})=(g^{-1}\Omega)\cap N$ is open in $N$ 
and $\psi_{\1}^{-1}\circ\psi_g\colon(g^{-1}\Omega)\cap N\to N$, $x\mapsto\sigma(gx)$ 
is smooth because of the hypothesis on~$N$. 

Thus we get a manifold structure on $G/B$ such that the translation mapping 
$L_g\colon G/B\to G/B$ is smooth for every $g\in G$. 
Since $\psi_{\1}\colon V_{\1}\to N\hookrightarrow G$ is a continuous 
cross-section of $\pi\colon G\to G/B$ over the open subset $V_\1$ 
of the manifold $G/B$, 
it follows that the projection $\pi$ is an open mapping. 
Moreover, since the multiplication mapping $N\times B\to\Omega$ 
is a bijection, we get $\pi(\Omega)=\pi(N)=V_{\1}\subseteq G/B$, 
and then it easily follows that $\pi\colon G\to G/B$ is continuous. 
Since $\pi$ is also open, 
the topology underlying the manifold structure of $G/B$ 
coincides with the quotient topology. 
As $B$ is a closed subgroup of $G$, the corresponding topology of $G/B$ 
is Hausdorff. 

It is clear from the above construction that \eqref{A1} 
is an action of $G$ by smooth transforms on~$G/B$. 
Therefore, for proving that~\eqref{A1} is actually a smooth  mapping, 
it is enough to check that it is smooth on some neighborhood of 
the point $(\1,\1 B)\in G\times G/B$. 
To this end, let $U$ be an open neighborhood of $\1\in G$ 
and $V_{\1}'$ an open neighborhood of $\1 B\in B/B$ such that 
$L_g(x)\in V_{\1}$ for all $g\in U$ and $x\in V_{\1}'$. 
Then we have a commutative diagram 
$$
\begin{CD}
U\times\pi^{-1}(V_{\1}') @>>> \pi^{-1}(V_{\1}) \\
@A{\id_U\times(\psi_{\1})^{-1}\vert_{V_{\1}'}}AA @VV{\pi}V \\
U\times V_{\1}' @>>> V_{\1}
\end{CD}
$$
whose upper horizontal arrow is given by the multiplication in $G$, 
while the lower horizontal arrow is the appropriate restriction 
of the mapping~\eqref{A1}. 
Thus the latter restriction of the group action~\eqref{A1} 
factorizes as a composition of smooth mappings, 
and it is therefore smooth on the neighborhood $U\times V_{\1}'$ 
of the point $(\1,\1 B)\in G\times G/B$. 

Finally, suppose that $N' \subeq G$ is another subset satisfying {\rm(1)-(3)}. 
If $N'$ is an open subset of $N$, we clearly obtain the same manifold 
structure on $G/B$ because $G$ acts by diffeomorphisms, so that it is 
determined by the smooth structure in a neighborhood of~$\pi(\1)$. 
In the general case, the subset 
$\tilde N' := N' \cap \Omega$ is open in $N'$, 
and (3) implies that $N''B$ is open in $G$, so that 
$\tilde N := N \cap \tilde N'B$ is open in $N$. 
Passing from $N$ to $\tilde N$ and from $N'$ to $\tilde N'$, we 
may therefore assume that $NB = N'B$. Now (3) implies that the 
map $N' \to N, n' \mapsto \sigma(n')$ is a diffeomorphism whose 
inverse is given by $N \to N', n \mapsto \sigma'(n)$. 
From that we conclude that the map $\psi_\1' \circ \psi_\1 \: NB \to N'B$ 
is a diffeomorphism, which in turn implies that $N$ and $N'$ define 
the same smooth structure on $G/B$. 
\end{proof}

%\textbf{Acknowledgment.}

\end{document}